\documentclass[a4paper,11pt,reqno]{amsart}

\usepackage[top=3cm, bottom=3cm, left=3cm, right=3cm]{geometry}

\usepackage[utf8]{inputenx}

\usepackage[english]{babel}
\usepackage{microtype}

\usepackage{mathtools}
\usepackage{amssymb}
\usepackage{amsfonts}
\usepackage{amsthm}
\usepackage{mathrsfs}
\usepackage{stmaryrd}

\usepackage{graphicx}
\usepackage{tikz}
\usepackage{tikz-cd}
\usepackage{dynkin-diagrams}
\usepackage[all]{xy}

\usepackage{tensor}
\usepackage{centernot}
\usepackage{faktor}
\usepackage{xfrac}
\usepackage{esint}
\usepackage{bbm}

\usepackage{hyperref}

\usepackage{xcolor}
\usepackage{array, tabularx}
\usepackage{mathdots}
\usepackage{comment}
\usepackage{paralist}
\usepackage{xcolor}

\theoremstyle{plain}
\newtheorem{thm}{Theorem}[section]
\newtheorem{prop}[thm]{Proposition}
\newtheorem{lem}[thm]{Lemma}

\theoremstyle{definition}

\newtheorem{ex}[thm]{Example}

\theoremstyle{remark}
\newtheorem{rem}[thm]{Remark}

\begin{document}
\newpage
	
	\title{Compact lcK manifolds with Gauduchon parallel vector fields}

\author{Andrei Moroianu}

	\address{Andrei Moroianu\newline
		\textsc{ \indent Université Paris-Saclay, CNRS,  Laboratoire de mathématiques d'Orsay, \newline  \indent 91405, Orsay, France, and \newline  \indent Institute of Mathematics “Simion Stoilow” of the Romanian Academy, \newline  \indent 21 Calea Grivitei, 010702 Bucharest, Romania}}
\email{andrei.moroianu@math.cnrs.fr}

  \author{Giacomo Perri}

	\address{Giacomo Perri \newline
		\textsc{\indent Institut for Matematik, Aarhus University\newline 
			\indent 8000, Aarhus C, Denmark}}
	\email{g.perri@math.au.dk}

\subjclass[2020]{53C55, 53C05}
\keywords{Gauduchon connections, lcK manifolds, Vaisman manifolds, holonomy reduction.}
\thanks{The second author is supported by the Sapere Aude project $\lq\lq$Conformal geometry: metrics and cohomology" at Aarhus University}
\begin{abstract}
    We prove that on a compact locally conformally K\"ahler manifold, the existence of a non-trivial parallel vector field with respect to a Gauduchon connection $\nabla^t$, for $t\ne \frac{1}{3}$, forces the metric to be either Vaisman (for $t=-1$), globally conformally K\"ahler (for $t=1$), or K\"ahler for all other values of $t$. As a consequence, we characterize compact Vaisman manifolds in terms of their Bismut holonomy group. Moreover, we show that in complex dimension two, the existence of a non-trivial $\nabla^\frac{1}{3}$-parallel vector field forces the metric to be globally conformally K\"ahler.
\end{abstract}
\maketitle
\section{Introduction}

A central question in non-K\"ahler Hermitian geometry, posed by to S.T. Yau \cite[Problem 87]{Yau90}, asks whether one can say something nontrivial about a compact Hermitian manifold of complex dimension $n$ whose holonomy group with respect to some Hermitian connection is a proper subgroup of $\mathrm{U}(n)$.

Two natural Hermitian connections on a Hermitian manifold are the Bismut connection $\nabla^b$, characterized by the fact that its torsion is totally skew-symmetric, and the Chern connection $\nabla^c$, defined by the fact that its $(0,1)$-part is equal to the $\bar\partial$ operator.

More generally, a natural family of Hermitian connections for which one might try to investigate Yau's question was
introduced by Gauduchon in \cite{Gau97}. The Gauduchon connections
$\{\nabla^t\}_{t\in\mathbb{R}}$ form the affine line joining the Bismut
connection and the Chern connection, namely
\begin{equation}
\nabla^t
=
\frac{1-t}{2}\nabla^b
+
\frac{1+t}{2}\nabla^c.
\end{equation}
Thus, $\nabla^{-1}=\nabla^b$ and $\nabla^1=\nabla^c$. At $t=0$, one
recovers the first canonical connection, obtained by projecting the
Levi--Civita connection onto the holomorphic tangent bundle. The connection
$\nabla^{1/3}$ has torsion of minimal norm within the Gauduchon family and
is therefore called the ``minimal Gauduchon connection''. As we shall see, this parameter corresponds to an
exceptional case in our setting.
Although all of these connections coincide with the Levi--Civita connection in the K\"ahler case, they are generally distinct in the non-K\"ahler setting. 

Motivated by Yau's question, we investigate the existence of nonzero vector fields
that are parallel with respect to a Gauduchon connection on a compact
Hermitian manifold. Indeed, if $V$ is $\nabla^t$-parallel, then so is $JV$,
and the holonomy group $\text{Hol}(\nabla^t)$ is contained in $\mathrm{U}(n-1)$. 
In this paper, we study this problem in the setting of locally conformally K\"ahler geometry, a type of non-K\"ahler Hermitian geometry interesting from the conformal point of view. 

A Hermitian manifold $(M^{2n},J,g)$ is called locally conformally
K\"ahler (lcK) if its fundamental $2$-form $\omega$ satisfies
$$
d\omega=\theta\wedge\omega
$$
for some closed one-form $\theta$, called the Lee form of $\omega$. The vanishing, exactness, and Levi--Civita parallelism of $\theta$ correspond, respectively, to the K\"ahler, globally conformally K\"ahler, and Vaisman case. 
The Lee form therefore provides a natural measure of
the failure of an lcK metric to be K\"ahler.

In \cite{Mor15,MMP20} it is proved that a compact lcK manifold admitting a nonzero Levi--Civita parallel vector field is globally conformally K\"ahler or Vaisman. 
Consequently, a compact lcK manifold is
Vaisman if and only if its Levi--Civita holonomy group is contained in
$\mathrm{SO}(2n-1)$. 
It is then natural to investigate if an analogous rigidity result holds when the Levi--Civita connection is replaced by some Gauduchon connection.  
A partial result in this direction was obtained by Andrada and Villacampa in the Bismut case. They prove in \cite{AV22} that on a compact lcK manifold, the Lee form is Levi--Civita parallel if and only if it is Bismut parallel. This means that for compact Vaisman manifolds, the Bismut holonomy group is contained in $\mathrm{U}(n-1)$. We will show that this property actually characterizes compact Vaisman manifolds by proving the following result.

\begin{thm}\label{thm:holb-U(n-1)-vaisman}
Let $(M^{2n},J,\omega,\theta)$ be a compact lcK manifold.
Then the following are equivalent:
\begin{enumerate}
\item[\normalfont(i)] $(M,J,\omega,\theta)$ is Vaisman;
\item[\normalfont(ii)] $\mathrm{Hol}(\nabla^b)\subset \mathrm{U}(n-1)$.
\end{enumerate}
\end{thm}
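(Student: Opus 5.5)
The implication (i)$\Rightarrow$(ii) follows from the result of Andrada and Villacampa \cite{AV22} quoted above. On a compact Vaisman manifold the Lee form $\theta$ is Levi--Civita parallel, so it is also $\nabla^b$-parallel. Since $\nabla^b J=0$ and $\nabla^b g=0$, the vector fields $\theta^\sharp$ and $J\theta^\sharp$ are then $\nabla^b$-parallel. They are nonzero (we use the convention that a Vaisman metric has $\theta\neq 0$) and orthogonal. Hence $\mathrm{Hol}(\nabla^b)$ fixes the complex line they span pointwise, so it lies in $\mathrm{U}(n-1)$.

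For (ii)$\Rightarrow$(i), the inclusion $\mathrm{Hol}(\nabla^b)\subset\mathrm{U}(n-1)$ gives, by the holonomy principle, a nonzero $\nabla^b$-parallel vector field $V$. Then $JV$ is also $\nabla^b$-parallel. So it suffices to prove the $t=-1$ case of the main rigidity statement of the abstract: a nontrivial $\nabla^b$-parallel vector field on a compact lcK manifold forces the metric to be Vaisman. I plan to prove this as follows.

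Write $\nabla^b=\nabla+\tfrac12 g^{-1}H$, where $H=Jd\omega$ is the totally skew torsion. For an lcK metric, $d\omega=\theta\wedge\omega$ gives $H=\pm J\theta\wedge\omega$, up to the sign fixed by conventions. The equation $\nabla^bV=0$ then reads $\nabla_XV=-\tfrac12 H(X,V,\cdot)^\sharp$. This endomorphism is skew, so $V$ is a Levi--Civita Killing field, and likewise $JV$. Expanding $H=J\theta\wedge\omega$ gives an explicit formula for $\nabla V$ in terms of $\theta(V)$, $J\theta(V)$, $\theta$, $J\theta$, $V^\flat$ and $JV^\flat$. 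In particular $dV^\flat=-\iota_VH$.

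The key step is to extract from this formula that $\theta^\sharp$ is pointwise a combination of $V$ and $JV$ with constant coefficients, so that $\theta$ is $\nabla^b$-parallel. The Andrada--Villacampa result \cite{AV22} then upgrades this to Levi--Civita parallelism, i.e.\ the Vaisman condition. To get there, I would first apply $d$ to $dV^\flat=-\iota_VH$. This gives $\mathcal L_VH=0$, and since $V$ is Killing and preserves $J$ (as $\nabla^bJ=0$ and $V$ is Killing), it also gives $\mathcal L_V\theta=0$. Next I would take the codifferential of the formula for $\nabla V$ and use the Bochner identity for Killing fields, $\nabla^*\nabla V=\mathrm{Ric}(V)$. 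Integrating over the compact manifold should show that the component of $\theta$ orthogonal to $\mathrm{span}\{V^\flat,JV^\flat\}$ vanishes, and that $\theta(V)$ and $\theta(JV)$ are constant. If needed, I would instead work with the Gauduchon metric in the conformal class, which is lcK with coclosed Lee form; this is justified because $V$ is Killing, so by averaging one may choose that metric $V$-invariant.

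I expect this integral identity to be the main obstacle. The quantity $|\nabla V|^2$ involves $|\theta|^2|V|^2$ together with the mixed terms $\theta(V)^2+\theta(JV)^2$. One has to show that the resulting combination is a nonnegative square whose integral vanishes, and this is where the specific value $t=-1$ should enter: for other $t$ the coefficients in $\nabla V$ change, and the argument instead forces $\theta=0$ or $\theta$ exact. Finally, one has to exclude the degenerate case $\theta=0$, where the metric is Kähler and hence not Vaisman; I expect this to follow from the normalization of the parallel fields or to be handled by the convention for the statement.
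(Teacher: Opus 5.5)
\textbf{Assessment.} Your (i)$\Rightarrow$(ii) and your reduction of (ii)$\Rightarrow$(i) to the statement ``a nonzero $\nabla^b$-parallel $V$ forces Vaisman'' match the paper. The gap is in your proof of that statement.

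\textbf{The key step fails in complex dimension two.} Your key step is an integral/Bochner identity showing that $\theta_0$ (the part of $\theta$ orthogonal to $V,JV$) vanishes and that $a=\theta(V)$, $b=\theta(JV)$ are constant. You only assert that such an identity ``should'' exist, and its conclusion is false when $n=2$. Take the standard Hopf surface $S^1\times S^3$ with its Vaisman metric and left-invariant $J$. Its Bismut connection is the flat connection making left-invariant fields parallel. So you may take $V$ left-invariant and orthogonal to $\theta^\sharp$ and $J\theta^\sharp$, which gives $a=b=0$ and $\theta_0=\theta\neq0$. Hence no identity can give $\theta^\sharp\in\mathrm{span}\{V,JV\}$ in general, and the case $\theta_0\not\equiv0$ needs a different idea.

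\textbf{How the paper handles $\theta_0\not\equiv0$.} The paper uses Killing fields. The fields $J\theta_0^\sharp=[V,JV]$ and the iterated brackets \eqref{A}--\eqref{4} are Killing. The lemma on proportional Killing fields then makes $a$, $b$, $|\theta_0|$ constant.
\begin{itemize}
\item For $n>2$, Lemma~\ref{Lemmaforf} gives $a=b=0$. Integrating \eqref{again} then gives $(n-2)\int|\theta|^2=0$, a contradiction.
\item For $n=2$, either $\theta^\sharp$ is Killing and closed, hence parallel, or $a=b=0$. In the latter case $J\theta^\sharp$ is Killing and holomorphic with $|\theta|$ constant, and \cite[Theorem~1]{MMO18} gives Vaisman.
\end{itemize}
When $\theta_0\equiv0$, Lemma~\ref{theta0zeroonopenset} with $t=-1$ gives $da=db=0$. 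Your intended conclusion via Theorem~\ref{BismutLCparallelAV} then goes through.

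\textbf{Intermediate claims that also fail.}
\begin{itemize}
\item A Killing, $\nabla^b$-parallel field need not preserve $J$. By \eqref{LiederivativeofJ} at $t=-1$, $\mathcal{L}_VJ=JV\wedge\theta+V\wedge J\theta$, which is nonzero wherever $\theta_0\neq0$. In the Hopf example the flow of $V$ is a right translation, which does not preserve the left-invariant $J$.
\item From $d(\iota_VH)=0$ you only get $\mathcal{L}_VH=\iota_VdH$, not $\mathcal{L}_VH=0$.
\item Since $\mathcal{L}_V\theta=d(\theta(V))=da$, the claim $\mathcal{L}_V\theta=0$ is exactly the constancy of $a$. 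That is part of what you must prove, not an early consequence.
\item Passing to the Gauduchon metric loses the hypothesis. A conformal change alters $\nabla^b$, so $V$ is no longer Bismut-parallel.
\item The case $\theta\equiv0$ cannot be excluded by any normalization: a flat torus has trivial Bismut holonomy. The statement must be read for proper lcK structures, which the paper also assumes tacitly.
\end{itemize}
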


More generally, we show that, for generic values of $t$, the existence of a nonzero $\nabla^t$-parallel vector
field forces the lcK metric to be K\"ahler, while for special parameters such as those associated with the Bismut and the Chern connections, a non-K\"ahler metric can occur. Our main result is the following.

\begin{thm}\label{maintheorem}
    Let $(M,J,\omega,\theta)$ be a compact lcK manifold of complex dimension $n\geq 2$ admitting a nontrivial $\nabla^t$-parallel vector field $V$, where $\nabla^t$ denotes the Gauduchon connection associated with the parameter $t\in\mathbb{R}$. Then the following hold:
    \begin{enumerate}
        \item[\normalfont(i)] If $t\ne \pm 1,\frac{1}{3}$, then $\theta\equiv 0$, so the metric $\omega$ is K\"ahler, and $V$ is parallel with respect to the Levi-Civita connection.
        \item[\normalfont(ii)] If $t=-1$, then the metric $\omega$ is Vaisman. Moreover, in complex dimension at least three, $V$ is a linear combination with constant coefficients of the Lee and anti-Lee vector fields $\theta^\sharp$ and $J\theta^\sharp$.
        \item[\normalfont(iii)] If $t=1$ then the Lee form $\theta$ is exact; in particular, $(M,J,\omega,\theta)$ is globally conformally K\"ahler. Moreover, there exist a complete simply connected K\"ahler manifold $(N,g_N,\omega_N)$ of real dimension $2n-2$, a smooth real function $c:\mathbb{R}^2\to\mathbb{R}$ (non-constant, if $\theta\not\equiv 0$), and a discrete co-compact group $\Gamma$ acting freely and totally discontinuously on $\mathbb{R}^2\times N$, preserving the metric $ds^2+dt^2+e^{c(s,t)}g_N$, the Hermitian $2$-form $ds\wedge dt+e^{c(s,t)}\omega_N$ and the vector fields $\partial_s$ and $\partial_t$, such that $M$ is diffeomorphic to $ (\mathbb{R}^2\times N)/\Gamma$, and the structure $(g,\omega,\theta)$ corresponds to $(ds^2+dt^2+e^{c(s,t)}g_N,ds\wedge dt+e^{c(s,t)}\omega_N,dc)$ through this diffeomorphism.
        \item[\normalfont(iv)] If $t=\frac{1}{3}$ then $V,JV$ are orthogonal to the Lee vector field $\theta^\sharp$. Moreover, in complex dimension $n=2$, the Lee form $\theta$ is exact; in particular, $(M,J,\omega,\theta)$ is globally conformally K\"ahler.
    \end{enumerate}
  \end{thm}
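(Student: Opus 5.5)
The plan is to write $\nabla^t$ explicitly in terms of the Levi-Civita connection $\nabla$ and the Lee form, and then to test the parallelism equation against integral identities coming from compactness.

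\textbf{Setup and torsion.} For an lcK metric, $d\omega=\theta\wedge\omega$ gives
\[
d^c\omega=-J\theta\wedge\omega \quad\text{(up to sign convention)},
\]
and Gauduchon's formula reads $g(\nabla^t_XY,Z)=g(\nabla_XY,Z)+\frac{t-1}{4}d^c\omega(X,Y,Z)+\frac{t+1}{4}d^c\omega(X,JY,JZ)$, with normalisations fixed once. Let $V$ be $\nabla^t$-parallel, and write $\xi=V^\flat$, $a=\theta(V)$, $b=\theta(JV)$. The equation $\nabla^t V=0$ becomes an explicit formula
\[
\nabla V=A_t(\theta,V),
\]
where $A_t$ is a bilinear expression in $\theta,J\theta,V,JV$ and $\omega$, with coefficients affine in $t$. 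Since $\nabla^t$ is metric and Hermitian, $|V|$ is constant and $JV$ is also $\nabla^t$-parallel, so the same holds for $JV$.

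\textbf{Symmetric and skew parts.} Split $\nabla\xi$ into its symmetric part $\frac12\mathcal L_Vg$ and its skew part $\frac12 d\xi$. In particular this gives explicit formulas
\[
\delta\xi=c_1(t)\,\theta(V),\qquad \delta(J\xi)=c_2(t)\,\theta(JV),
\]
with $c_i$ linear in $t$. Integrating over the compact manifold $M$ yields $\int\theta(V)=0$ whenever $c_1\neq0$, and similarly for $JV$.

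\textbf{Bochner argument for the Lee form.} Next I apply a Bochner-type argument: compute $\int|\nabla\xi|^2=\int \mathrm{Ric}(V,V)$ (using $\delta\xi$ and $d\xi$), or more efficiently use $d\theta=0$ together with $d(d\omega)=0$. Differentiating $d\xi$ once more and using $d^2=0$ gives pointwise algebraic identities. The identity I expect is
\[
0=d(d\xi)=\alpha(t)\,d\bigl(\theta(V)\bigr)\wedge\omega+\beta(t)\,\theta\wedge\dots
\]
I would then contract with $\omega^{n-1}$, using $d\theta=0$ and $L_V\theta=d(\theta(V))$, to obtain something like
\[
p(t)\int|\theta|^2|V|^2-q(t)\int\bigl(\theta(V)^2+\theta(JV)^2\bigr)=0.
\]
Since $\theta(V)^2+\theta(JV)^2\le|\theta|^2|V|^2$, this forces $\theta=0$ unless the coefficients degenerate. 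The factors $(1-t)$, $(1+t)$ and $(3t-1)$ should appear exactly at this point.

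\textbf{The exceptional cases.} Case (i) then follows, and with $\theta=0$ all connections agree, so $V$ is Levi-Civita parallel. For $t=-1$ the inequality instead forces $\theta\in\mathrm{span}(V,JV)$ pointwise with constant length, whence $\theta$ is parallel by the Andrada--Villacampa result \cite{AV22}, so the metric is Vaisman. For $n\ge3$ the extra components of the Bismut torsion then force $V\in\mathrm{span}(\theta^\sharp,J\theta^\sharp)$ with constant coefficients.

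For $t=1$ the Chern-parallel $V,JV$ are holomorphic Killing fields. I would show that $\theta$ vanishes on $V$ and $JV$, and that the distribution $\{V,JV\}^\perp$ is integrable and totally umbilic with mean curvature along $\theta$. A de Rham-type splitting on the universal cover, as in the Levi-Civita case treated in \cite{Mor15,MMP20}, then gives the warped product $ds^2+dt^2+e^{c}g_N$, and $\theta=dc$ is exact.

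For $t=\frac13$ the identity only yields $\theta(V)=\theta(JV)=0$. In dimension $n=2$, $\theta$ is then determined up to the pointwise orthogonal complement, and I would try to show that $\theta^\sharp$ and $J\theta^\sharp$ span an integrable distribution forcing $\theta$ to be exact; I have the least idea how this will go.

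\textbf{Main obstacle.} I expect the main obstacle to be the $t=1$ structure theorem, and getting the exact coefficient polynomial in the integral identity.
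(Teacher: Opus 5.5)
There is a genuine gap at the core of the proposal. The generic case rests on an integral identity that you never derive, and the mechanism you propose (one integrated identity plus $\theta(V)^2+\theta(JV)^2\le|\theta|^2$) does not work for all $t\ne\pm1,\frac13$.

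Carry out the trace computation for $n\ge3$. Lemma~\ref{Lemmaforf} and its analogue for $JV$, substituted back into \eqref{ddt3}--\eqref{ddt4}, give $dJ\theta=\frac{2(t-1)}{1-3t}f\,\omega+\mu\,V^\flat\wedge JV^\flat$ with $f$ as in \eqref{fVprime}. Inserting this into \eqref{again}, integrating and using \eqref{codifft} yields $\int|\theta|^2=r\int(a^2+b^2)$ with
\[
r-1=\frac{(n-1)(n-2)(1-t^2)}{(3t-1)(t+2n-3)}.
\]
Your guess about the factors $1\pm t$ and $3t-1$ is right, but the conclusion is not. The inequality forces $\theta\equiv0$ only when $r<1$. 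For $t\in(\frac13,1)$ and $t\in(3-2n,-1)$ one has $r>1$, and there is no contradiction. In complex dimension two the analogous identity is $(t+1)\int|\theta_0|^2=0$ for all $t\ne\frac13$, so it never gives more than $\theta\in\mathrm{span}(V^\flat,JV^\flat)$.

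What is missing is pointwise information. The paper obtains two independent expressions for $da$ modulo $V^\flat$. The first is Lemma~\ref{Lemmaforf}. The second is Lemma~\ref{Propositiontda}: contract \eqref{ddt3} with $V$, compute $\iota_VdJ\theta$ by Cartan's formula, and use $Jda=db+\frac{t+1}{2}(b\theta-aJ\theta)$. Evaluated on $\theta_0^\sharp$, they give $t\,\theta_0^\sharp(a)=-\frac{(t+1)^2}{8}a|\theta_0|^2$ and $t\,\theta_0^\sharp(a)=\frac{t(t-3)}{4}a|\theta_0|^2$. These agree only for $t\in\{1,\frac13\}$, hence $a\theta_0=b\theta_0=0$. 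Then:
\begin{enumerate}
\item[(a)] Lemma~\ref{theta0zeroonopenset} and density give $V(a)=-\frac{t+1}{2}a^2$ on all of $M$.
\item[(b)] Integrating against $\delta V^\flat$ gives $a=b=0$ for $n>2$, so $dJ\theta=\frac{t-1}{2}|\theta|^2V^\flat\wedge JV^\flat$.
\item[(c)] Differentiating yields $d|\theta|^2=-\frac{t+1}{2}|\theta|^2\theta$, and looking at a maximum of $|\theta|^2$ gives $\theta\equiv0$.
\end{enumerate}
For $n=2$, after $\theta_0\equiv0$ one still needs $\delta\theta=0$ and $0=\int\langle\delta\theta,a^2\rangle=-(t+1)\int a^2|\theta|^2$.

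The special cases also contain false or incomplete steps.

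For $t=-1$ and $n\ge3$, your route can in fact be completed. There $r=1$, so $\theta_0\equiv0$, and Lemma~\ref{theta0zeroonopenset} makes $a,b$ constant. Constancy of $a$ and $b$, not of $|\theta|$, is what you need to get $\nabla^b\theta=0$ and apply \cite{AV22}. For $n=2$, however, the claim $\theta\in\mathrm{span}(V^\flat,JV^\flat)$ is false: Hopf Vaisman surfaces carry Bismut-parallel $V$ orthogonal to $\theta^\sharp$ and $J\theta^\sharp$. The paper needs a separate argument there, taking iterated brackets of the Killing fields $V,JV$ and using that pointwise proportional Killing fields are constant multiples of each other, to show $a,b,|\theta_0|$ are constant.

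For $t=1$, $V$ and $JV$ are holomorphic but not Killing: once $\theta_0=0$, $\mathcal L_Vg=a(g-V^\flat\otimes V^\flat-JV^\flat\otimes JV^\flat)$. What holds is the opposite of what you wrote: $\theta_0\equiv0$, i.e.\ $\theta$ vanishes on $\{V,JV\}^\perp$, as your own ``mean curvature along $\theta$'' requires. The splitting needs the rescaled metric $g_1=V^\flat\otimes V^\flat+JV^\flat\otimes JV^\flat+e^{-\varphi}g_0$, in which $V,JV$ are parallel, together with a proof that $g_1$ is complete. Moreover, $\theta=dc$ on $\tilde M$ is automatic. Exactness on $M$ requires showing that $c$ is invariant under $\pi_1(M)$, which is \cite[Lemma 3.4]{Mor15}.

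For $t=\frac13$ you give no argument for $a=b=0$. In the paper, the $dJ\theta$-term of \eqref{ddt4} drops out, so $da=-\frac23a\theta$, and one integrates, with an extra argument when $n=2$. You also give none for exactness in dimension two. There the paper uses the Killing field $e^{\frac23\varphi}J\theta^\sharp$ on $\tilde M\cong S\times\mathbb R^2$ to show $S\cong\mathbb S^2$ or $\mathbb R^2$. Hence $\varphi$ is bounded on one side and therefore deck-invariant.
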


The proof of Theorem~\ref{maintheorem} relies on different arguments depending on the value of $t$.
We first decompose the Lee form as
$$
\theta=aV^\flat+bJV^\flat+\theta_0,
$$
where $\flat$ denotes metric duality, and $\theta_0$ is orthogonal to $V^\flat$ and $JV^\flat$.
In the lcK setting, we can express the Gauduchon connections in terms of the Levi--Civita connection and the Lee form $\theta$. This allows us to compute the exterior derivatives and codifferentials of $V^\flat$ and $JV^\flat$.
In the generic case $t\ne\pm 1,\frac{1}{3}$, we take contractions and wedge products of $dV^\flat,dJV^\flat$ and use identities such as $d^2V^\flat=d^2JV^\flat=0$ to obtain new relations. We then use compactness and integration arguments to deduce that $\theta\equiv0$.
In the Bismut case, we distinguish between the cases $\theta_0\not\equiv 0$ and $\theta_0\equiv 0$. We first assume that $\theta_0\not\equiv 0$ and note that $V$ and $JV$ are Killing vector fields. By taking Lie derivatives, we produce new Killing vector fields. Wedge-product arguments then show that this case can occur only in complex dimension two. When $\theta_0\equiv 0$, we prove that $a$ and $b$ are constant.
In the Chern case, we first prove that the Lee vector field is a combination of $V$ and $JV$. Once this is established, we show that the lifts $\tilde V$ and $\tilde{JV}$ to the universal cover are parallel with respect to the Levi--Civita connection of a metric conformal to the lift of $g$. This yields a splitting of the universal cover from which we will deduce the exactness of the Lie form $\theta$.

Finally, we treat the case $t=\frac{1}{3}$. Using integration arguments, we prove that $a=b=0$. In complex dimension two, using suitable commuting vector fields, we identify the universal cover $\tilde M$ with $S\times \mathbb R^2$, where $S$ is a complete connected surface. The existence of a nontrivial Killing vector field on $S$ with nonempty zero set reduces the possibilities to $S=\mathbb S^2,\mathbb R^2$. In each case, we deduce that the Lee form $\theta$ is exact.

While in complex dimension two we give a complete picture of the existence of a $\nabla^{\frac13}$-parallel vector field, also providing an example in which the metric is gcK but non-K\"ahler, the problem remains open in higher dimensions. 
However, in Remark~\ref{high_dim_ex} we construct examples in every complex dimension of compact gcK non-K\"ahler manifolds admitting a nontrivial $\nabla^{\frac13}$-parallel vector field.

We note that the techniques used for the other parameters do not
extend to the case $t=\frac{1}{3}$. The arguments in the Bismut and Chern
cases rely on geometric properties specific to the corresponding
values $t=-1$ and $t=1$. For $t\ne\pm1,\frac{1}{3}$, on the other hand, we derive
relations between $V$ and $\theta$ which can be manipulated to show that
$\theta\equiv 0$. At $t=\frac{1}{3}$, these relations yield less information,
since some terms are multiplied by the factor $(t-\frac{1}{3})$ and
therefore vanish.

We organize the paper as follows. We start with a preliminary section in which we recall basic notions of lcK geometry and Gauduchon connections. We also present some of the consequences of the existence of a nontrivial $\nabla^t$-parallel vector field on a compact lcK manifold, and conclude the section with some remarks on the Bismut connection in the lcK setting. In the following sections, we study the cases of Theorem~\ref{maintheorem} one by one, starting with 
the generic case $t\ne \pm1,\frac{1}{3}$,
then moving to the Bismut case, and to the Chern one. In the last section, we treat the exceptional case $t=\frac{1}{3}$.

\hfill

\noindent {\it Acknowledgments.}
The second author sincerely thanks Alexandra Otiman for her support, encouragement, and many valuable discussions and suggestions.

\section{Preliminaries}
In this section, we collect the preliminary material needed for the proof of Theorem \ref{maintheorem}. We begin by recalling the notion of locally conformally K\"ahler (lcK) manifolds together with several identities that will be used throughout the manuscript. We then introduce the family of Gauduchon connections and study their properties in the lcK setting. Assuming the existence of a nontrivial vector field parallel with respect to a Gauduchon connection, we derive several identities involving this vector field, including formulas for its codifferential, exterior derivative, and other related quantities that will be used repeatedly later. Finally, we discuss the Bismut connection in the lcK setting. Since the Bismut connection is a distinguished member of the family of Gauduchon connections, we conclude by establishing some additional properties that are specific to this case.

\subsection{LcK manifolds}
A locally conformally K\"ahler (lcK) manifold is a connected Hermitian manifold $(M,J,g,\omega,\theta)$ of complex dimension $n\geq 2$, where
$\omega:=g(J\cdot,\cdot)$
is the fundamental $2$-form associated with the Riemannian metric $g$, and $\theta$ is a closed $1$-form, called the \emph{Lee form} of $\omega$, satisfying
\begin{align}\label{lcKdefinition}
d\omega=\theta\wedge\omega.
\end{align}
If $\theta\equiv 0$, then $\omega$ is Kähler. An lcK metric is called \emph{proper} if it is not Kähler.
If $\theta=d\varphi$ is exact, then the conformally equivalent metric
$e^{-\varphi}\omega$
is K\"ahler. In this case, $\omega$ is called \emph{globally conformally K\"ahler} (gcK). The lcK structure is called \emph{strict} if the Lee form $\theta$ is not exact.
Finally, an lcK manifold $(M,J,g,\omega,\theta)$ is called \emph{Vaisman} if the Lee form $\theta$ is non-zero and parallel with respect to the Levi--Civita connection $\nabla^g$ of $g$.\\

We extend the action of $J$ to differential forms by setting
$$(J\alpha)(X_1,\ldots,X_k)
:=(-1)^k\alpha(JX_1,\ldots,JX_k),$$
where $\alpha$ is a $k$-form and $X_1,\ldots,X_k$ are vector fields. In particular, any $(1,1)$-form is $J$-invariant. Indeed, if $\beta$ is a $(p,q)$-form we have
$$J\beta = i^{\,q-p}\beta.$$

Throughout the remainder of this manuscript, we assume that $(M,J,g,\omega,\theta)$ is a compact lcK manifold, and we denote by $\nabla=\nabla^g$ the Levi--Civita connection of $g$.\\

Given a vector field $V$, we denote by  $V^\flat := g(V,\,\cdot\,) $ its metric-dual $1$-form. Conversely, for a $1$-form $\alpha$, we denote by $\alpha^\sharp$ its metric-dual vector field.

The covariant derivative of $J$ is given by \cite{Mor15}
\begin{align}\label{covariantderivativeofJ}
    \nabla_XJ=\frac{1}{2}\big(X\wedge J\theta+JX\wedge \theta\big),
\end{align}
where, for a vector field $X$ and a $1$-form $\alpha$, the endomorphism $X\wedge\alpha$ is defined by $(X\wedge\alpha )(Y):=X^\flat(Y)\alpha^\sharp-\alpha(Y)X$ for any vector field $Y$.

Recall that the codifferential of a $1$-form $\alpha$ is given by
\begin{align}\label{codifferential formula}
    \delta\alpha=-\sum_{j=1}^{2n}(\nabla_{e_j}\alpha)(e_j),
\end{align}
where $\{e_j\}_{j=1}^{2n}$ is a local orthonormal frame. As an immediate consequence of \eqref{codifferential formula}, if $f$ is a smooth function, then
\begin{align}\label{codifferentialfunctiononeform}
    \delta(f\alpha)=-\alpha^\sharp(f)+f\delta\alpha.
\end{align}
Throughout the manuscript, we will use the above identity \eqref{codifferentialfunctiononeform} without further reference. \\
We next prove that, if
$\{e_j\}_{j=1}^{2n}$
is a local orthonormal frame, then
\begin{align}\label{again}
    -\delta\theta+(1-n)|\theta|^2=\frac{1}{2}\sum_{j=1}^ndJ\theta(e_j,Je_j).
\end{align}
Using \eqref{covariantderivativeofJ}, we obtain
\begin{align}\label{nablaJbasis}
    \sum_{j=1}^{2n}\big(\nabla_{e_j}J\big)(e_j)=(n-1)J\theta^\sharp, \qquad \sum_{j=1}^{2n}\big(\nabla_{Je_j}J\big)(e_j)=-(n-1)\theta^\sharp.
\end{align}
Then \eqref{again} follows from
\begin{align*}
    \sum_{j=1}^{2n} dJ\theta(e_j,Je_j)&=\sum_{j=1}^{2n} e_j\big(J\theta(Je_j)\big)-Je_j\big(J\theta(e_j)\big)-J\theta\big(\nabla_{e_j}Je_j-\nabla_{Je_j}e_j\big)\\
&=\sum_{j=1}^{2n}e_j\big(\theta(e_j)\big)+Je_j\big(\theta(Je_j)\big)\\
&-J\theta\big((\nabla_{e_j}J)(e_j)+J\nabla_{e_j}e_j\big)+\theta\big((\nabla_{Je_j}J)(e_j)-\nabla_{Je_j}Je_j\big)\\
    &\overset{\eqref{codifferential formula},\eqref{nablaJbasis}}{=}-2\delta\theta+2(1-n)|\theta|^2.
\end{align*}

We conclude this section by recalling that the $2$-form $dJ\theta$ is of type $(1,1)$. Recall that the operator $d^c:=JdJ^{-1}$ satisfies
$$dd^c=2i\partial\overline{\partial}.$$
In particular, for any smooth function $f$, the $2$-form $dd^cf$ is of type $(1,1)$.
Since $\theta$ is closed, it is locally exact. Hence, there exists a smooth local function $\varphi$ such that $\theta=d\varphi$. Therefore,
\begin{align*}
    dJ\theta=dJd\varphi=dJdJ^{-1}\varphi=dd^c\varphi,
\end{align*}
which shows that $dJ\theta$ is a $(1,1)$-form.

\subsection{Gauduchon connections in the lcK setting}
Let $\{\nabla_g^t\}_{t\in\mathbb R}\equiv
\{\nabla^t\}_{t\in\mathbb R}$ denote the family of Gauduchon connections introduced in \cite{Gau97}:
\begin{align}\label{Gauduchongeneral}
    g(\nabla^t_XY, Z)=g(\nabla^g_XY, Z)-\frac{t-1}{4}d\omega(JX, JY, JZ)-\frac{t+1}{4}d\omega(JX, Y, Z).
\end{align}
For any $t$, the connection \(\nabla^t\) is Hermitian, that is,
\begin{align*}
    \nabla^t g=0,\qquad \nabla^t J=0.
\end{align*}
In particular, when $t=-1$, one recovers the Bismut connection $\nabla^b$ (see Section \ref{Bismut connection in the lcK setting}). The connection $\nabla^0$ is called the \emph{first canonical connection}; it is obtained by projecting the Levi--Civita connection onto the holomorphic tangent bundle. For $t=1$, the connection $\nabla^1$ coincides with the Chern connection, namely the unique Hermitian connection on the holomorphic tangent bundle that is compatible with the $\bar{\partial}$-operator.\\

In the lcK setting, formula \eqref{Gauduchongeneral} is equivalent to
\begin{equation}\label{nablatt}
\begin{split}
    \nabla^t_YZ=\nabla^g_YZ&-\frac{t}{2}\theta(JY)JZ+\frac{t-1}{4}\theta(JZ)JY\\
    &+\frac{t-1}{4}\omega(Y,Z)J\theta^{\sharp}-\frac{t+1}{4}\theta(Z)Y+\frac{t+1}{4}g(Y,Z)\theta^{\sharp}.
    \end{split}
\end{equation}
The torsion of $\nabla^t$ is therefore given by
\begin{equation}\label{torsion-t}
    \begin{split}
        T^t(Y,Z)=\frac{1-3t}{4}\big(\theta(JY)JZ-\theta(JZ)JY\big)+\frac{t-1}{2}\omega(Y,Z)J\theta^\sharp\\
-\frac{t+1}{4}\big(\theta(Z)Y-\theta(Y)Z\big).
    \end{split}
\end{equation}

\subsection{The Bismut connection in the lcK setting}\label{Bismut connection in the lcK setting}
The Bismut connection $\nabla^b$ of $g$ is the unique Hermitian connection whose torsion $3$-form is totally skew-symmetric. It coincides with the Gauduchon connection corresponding to the parameter $t=-1$. Hence, by \eqref{nablatt}, for any vector fields $X,Y$ on $M$, the Bismut connection can be written as
\begin{align}\label{bismutLC}
    \nabla^b_X Y
    =
    \nabla_X Y
    + \frac{1}{2}\Big(
    \theta(JX)JY
    - \theta(JY)JX
    - g(JX,Y)J\theta^\sharp
    \Big).
\end{align}
The torsion $3$-form of the Bismut connection is given by \cite[Lemma 3.1]{AV22}
$$
T^{-1}\equiv T^b=-J\theta\wedge\omega.
$$ 
The next lemma shows that, in the lcK setting, the torsion $3$-form is coclosed. As a consequence, the Bismut--Ricci form is $J$-invariant.
\begin{lem}\label{torsioncoclosed}
    On a lcK manifold, the torsion $3$-form of the Bismut connection is coclosed.
\end{lem}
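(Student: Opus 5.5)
The plan is to avoid computing the codifferential directly. Instead I will use the Hodge star, which turns coclosedness into closedness. Since $\delta=\pm *d*$ on a $2n$-dimensional manifold, it suffices to show that $*T^b=-*(J\theta\wedge\omega)$ is closed.

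\textbf{Step 1 (pointwise identity).} The key algebraic fact is the standard Lefschetz-type identity for a $1$-form $\alpha$ on a Hermitian manifold of complex dimension $n$:
$$*(J\alpha\wedge\omega)=\varepsilon\,\frac{1}{(n-2)!}\,\alpha\wedge\omega^{n-2},$$
where $\varepsilon=\pm1$ is a universal sign depending only on conventions. I will verify this in a unitary frame $e_1,Je_1,\dots,e_n,Je_n$. Both sides are $\mathbb{R}$-linear in $\alpha$ and $\mathrm{U}(n)$-equivariant, so it is enough to take $\alpha=e_1^\flat$, hence $J\alpha=\pm(Je_1)^\flat$. Write $\omega=\sum_k e^k\wedge Je^k$. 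Then $J\alpha\wedge\omega$ reduces to $\pm\, Je^1\wedge\sum_{k\ge2}e^k\wedge Je^k$. Its Hodge star is $\pm\, e^1\wedge\sum_{k\ge 2}\prod_{j\ne 1,k}(e^j\wedge Je^j)$, and this is exactly $\pm\, e^1\wedge\omega^{n-2}/(n-2)!$. For $n=2$ the identity simply reads $*(J\theta\wedge\omega)=\pm\theta$. Fixing the sign $\varepsilon$ is irrelevant for the argument, so only the shape of the identity matters.

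\textbf{Step 2 (closedness).} Apply Step 1 with $\alpha=\theta$, so that $*T^b$ is a constant multiple of $\theta\wedge\omega^{n-2}$. Then use $d\theta=0$ together with $d\omega^{n-2}=(n-2)\,\omega^{n-3}\wedge d\omega=(n-2)\,\theta\wedge\omega^{n-2}$, which follows from \eqref{lcKdefinition}. This gives
$$d(\theta\wedge\omega^{n-2})=-\theta\wedge d\omega^{n-2}=-(n-2)\,\theta\wedge\theta\wedge\omega^{n-2}=0.$$
Hence $d*T^b=0$, and therefore $\delta T^b=0$. Note that no compactness is needed, consistent with the statement holding on any lcK manifold.

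The only delicate point is Step 1, namely the bookkeeping of signs and combinatorial factors in the frame computation; the closedness argument itself is immediate. As a fallback, one could compute $\delta(J\theta\wedge\omega)=-\sum_j e_j\lrcorner\nabla_{e_j}(J\theta\wedge\omega)$ directly using \eqref{covariantderivativeofJ}, \eqref{nablaJbasis} and \eqref{again}, and check that all terms cancel. That route is messier, and I would use it only if the Hodge-star identity caused trouble.
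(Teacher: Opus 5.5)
Your proposal is correct and follows the paper's proof. The paper also establishes $*(J\theta\wedge\omega)=-\frac{\theta\wedge\omega^{n-2}}{(n-2)!}$ and concludes $d*T^b=0$ from $d\theta=0$ and $d\omega=\theta\wedge\omega$. The only difference is how the pointwise identity is checked: the paper pairs with an arbitrary $3$-form and uses $\iota_{J\theta^\sharp}$, while you compute in a unitary frame. In your frame computation the sign is $\varepsilon=-1$ and is the same for every $k$, so the terms do assemble into $e^1\wedge\omega^{n-2}/(n-2)!$.
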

\begin{proof}
   We first establish the identity
   \begin{align}\label{relation}
        *\big(J\theta\wedge \omega\big)=-\frac{\theta\wedge\omega^{n-2}}{(n-2)!}.
    \end{align}
    Consequently, $\delta T^b=-*d*(J\theta\wedge\omega)=0 $ since $d\theta=0$ and $d\omega=\theta\wedge \omega$.
    Let $\{e_j\}_{j=1}^{2n}$ be a local orthonormal frame. Then, for any $3$-form $\beta$, we have
\begin{align*}
    \beta\wedge *\big(J\theta\wedge \omega\big)&=\big(\beta,J\theta\wedge \omega\big)\frac{\omega^n}{n!}\\
    &=\frac{1}{2}\sum_{j=1}^{2n}\iota_{J\theta^\sharp}\beta (e_j,Je_j)\frac{\omega^n}{n!}\\
    &=\iota_{J\theta^\sharp}\beta\wedge\frac{\omega^{n-1}}{(n-1)!}\\
    &=\iota_{J\theta^\sharp}\bigg(\beta\wedge \frac{\omega^{n-1}}{(n-1)!}\bigg)+\beta\wedge\iota_{J\theta^\sharp}\frac{\omega^{n-1}}{(n-1)!}\\
    &=\beta\wedge \bigg(-\frac{\theta\wedge\omega^{n-2}}{(n-2)!}\bigg),
\end{align*}
which proves \eqref{relation}.
\end{proof}

Let $\text{R}^b$ be the curvature tensor of $\nabla^b$. The Bismut--Ricci tensor $\text{Ric}^b$ and the Bismut--Ricci form
$\rho^b$ of the Bismut connection $\nabla^b$ are defined, respectively,
by
$$
\text{Ric}^b(X,Y)
=\sum_{i=1}^{2n}\text{R}^b(e_i,X,Y,e_i),
\qquad
\rho^b(X,Y)
=\frac{1}{2}\sum_{i=1}^{2n}\text{R}^b(X,Y,e_i,Je_i),
$$
where $\{e_i\}_{i=1}^{2n}$ is a local orthonormal frame and $X,Y$ are
vector fields. 

Combining Lemma \ref{torsioncoclosed} with the following identities, which hold on every Hermitian manifold for any pair of vector fields $X,Y$ \cite[Corollary 3.2]{IP01},
\begin{equation}\label{relations}
    \begin{split}
    \text{Ric}^b(X,Y)-\text{Ric}^b(Y,X)&=-\delta T^b(X,Y),\\
    \text{Ric}^b(JX,JY)-\text{Ric}^b(Y,X)&=(n-1)\big(-\nabla^b_{JX}\theta(JY)+\nabla^b_Y\theta(X)\big),\\
    \rho^b(JX,JY)-\rho^b(X,Y)&=\delta T^b(JX,Y)
    -(n-1)\big(\nabla^b_X\theta(Y)-\nabla^b_Y\theta(X)\big),
    \end{split}
\end{equation}
we obtain the following proposition.
\begin{prop}
     Let $(M,J,g,\omega,\theta)$ be a lcK manifold. Then the following statements hold:
    \begin{enumerate}
        \item the Bismut--Ricci tensor $\text{Ric}^b$ is symmetric;
        \item the Bismut--Ricci form $\rho^b$ is $J$-invariant; equivalently, it is a $(1,1)$-form.
    \end{enumerate}
    Moreover, the Bismut--Ricci tensor $\text{Ric}^b$ is $J$-invariant if and only if the Lee form $\theta$ is holomorphic.
\end{prop}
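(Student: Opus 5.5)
The proposition follows from plugging Lemma \ref{torsioncoclosed}, i.e. $\delta T^b=0$, into the three identities \eqref{relations}, together with the fact that $\theta$ is closed.

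For (1), the first identity in \eqref{relations} gives $\mathrm{Ric}^b(X,Y)-\mathrm{Ric}^b(Y,X)=-\delta T^b(X,Y)=0$, so $\mathrm{Ric}^b$ is symmetric.

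For (2), the third identity reduces to
$$\rho^b(JX,JY)-\rho^b(X,Y)=-(n-1)\big((\nabla^b_X\theta)(Y)-(\nabla^b_Y\theta)(X)\big).$$
So it suffices to show that the antisymmetrization of $\nabla^b\theta$ vanishes. In general this antisymmetrization is $d\theta$ plus a torsion correction: $(\nabla^b_X\theta)(Y)-(\nabla^b_Y\theta)(X)=d\theta(X,Y)+\theta(T^b(X,Y))$. Since $d\theta=0$, it remains to check that $\theta(T^b(X,Y))=0$. Using \eqref{torsion-t} with $t=-1$, the $(t+1)$-terms drop out and
$$T^b(X,Y)=\theta(JX)JY-\theta(JY)JX-\omega(X,Y)J\theta^\sharp .$$
Now $\theta(JY)=-J\theta(Y)$ and $\theta(J\theta^\sharp)=0$, so $\theta(T^b(X,Y))=\theta(JX)\theta(JY)-\theta(JY)\theta(JX)=0$. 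Alternatively, $\theta(T^b(X,Y))=T^b(X,Y,\theta^\sharp)=-(J\theta\wedge\omega)(X,Y,\theta^\sharp)$; expanding and using $J\theta(\theta^\sharp)=0$ and $\iota_{\theta^\sharp}\omega=-J\theta$ (up to sign) gives terms $J\theta(X)J\theta(Y)-J\theta(Y)J\theta(X)=0$. Either way $\nabla^b\theta$ is symmetric, so $\rho^b(JX,JY)=\rho^b(X,Y)$. This $J$-invariance of a real 2-form is equivalent to $\rho^b$ being of type $(1,1)$.

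For the last claim, by (1) the second identity in \eqref{relations} becomes
$$\mathrm{Ric}^b(JX,JY)-\mathrm{Ric}^b(X,Y)=(n-1)\big((\nabla^b_Y\theta)(X)-(\nabla^b_{JX}\theta)(JY)\big).$$
Since $n\ge2$, $\mathrm{Ric}^b$ is $J$-invariant if and only if $(\nabla^b\theta)(J\cdot,J\cdot)=\nabla^b\theta$ as a bilinear form, after using the symmetry of $\nabla^b\theta$ established above. Because $\nabla^bJ=0$ and $\nabla^b g=0$, this condition says exactly that $\nabla^b(J\theta)$ commutes with $J$. That is, the vector field $\theta^\sharp$ satisfies $\nabla^b_{JX}\theta^\sharp=J\nabla^b_X\theta^\sharp$; equivalently, $\nabla^b\theta^\sharp$ has vanishing $(0,1)$-part on $T^{1,0}$.

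The main obstacle is identifying this with holomorphy of $\theta$, meaning $\theta^{\sharp}$ (or $\theta^{1,0}$) being holomorphic, i.e. $\mathcal{L}_{\theta^\sharp}J=0$. To do this I will compare $\nabla^b$ with a torsion-adjusted expression for $\mathcal{L}_{\theta^\sharp}J$:
$$(\mathcal{L}_{\theta^\sharp}J)X=[\theta^\sharp,JX]-J[\theta^\sharp,X].$$
Writing the brackets via $\nabla^b$ and $T^b$, the $\nabla^b_{\theta^\sharp}$ terms cancel because $\nabla^bJ=0$. What remains is $-\nabla^b_{JX}\theta^\sharp+J\nabla^b_X\theta^\sharp-T^b(\theta^\sharp,JX)+JT^b(\theta^\sharp,X)$. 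A direct check from the formula for $T^b$ above shows the torsion terms cancel: $T^b(\theta^\sharp,\cdot)$ commutes appropriately with $J$, the $\omega$-term and the $\theta(J\cdot)$-terms being complex linear since $\theta(J\theta^\sharp)=0$. Hence $\mathcal{L}_{\theta^\sharp}J=0$ if and only if $\nabla^b_{JX}\theta^\sharp=J\nabla^b_X\theta^\sharp$, which is the condition above. This completes the plan.
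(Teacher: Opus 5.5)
Your proof is correct, but it takes a somewhat different route from the paper for part (2) and for the holomorphy claim.

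The paper computes directly from \eqref{bismutLC} that $\nabla^b\theta=\nabla\theta$. Symmetry of $\nabla^b\theta$ then follows from $d\theta=0$, and the last claim is imported from \cite[Lemma~3]{MMO18}: $\theta^\sharp$ is holomorphic if and only if $\nabla\theta$ is $J$-invariant.

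You never leave the Bismut connection. You get symmetry of $\nabla^b\theta$ from the torsion identity for $d\theta$. You then reprove the holomorphy criterion in Bismut form, by expanding $\mathcal{L}_{\theta^\sharp}J$ with $\nabla^b$ and using $\nabla^bJ=0$.

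What each route buys: yours is self-contained and needs no external lemma. The paper's is shorter and also records the identity $\nabla^b\theta=\nabla\theta$, which links the Bismut and Levi--Civita pictures.

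Both of your torsion checks are the same fact, $\iota_{\theta^\sharp}T^b=-\iota_{\theta^\sharp}(J\theta\wedge\omega)=J\theta\wedge J\theta=0$. This uses $J\theta(\theta^\sharp)=0$ and $\iota_{\theta^\sharp}\omega=J\theta$. Since $\nabla^b=\nabla+\tfrac12T^b$, the same fact is exactly why $\nabla^b\theta=\nabla\theta$ in the paper.

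Two small corrections.

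First, the sign of the torsion term. With the convention $T(X,Y)=\nabla_XY-\nabla_YX-[X,Y]$ implicit in \eqref{brackett}, one has $(\nabla^b_X\theta)(Y)-(\nabla^b_Y\theta)(X)=d\theta(X,Y)-\theta(T^b(X,Y))$, with a minus sign rather than a plus. This is harmless, since that term vanishes.

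Second, your reason for the torsion terms in $\mathcal{L}_{\theta^\sharp}J$ vanishing is wrong, although the conclusion is right. The maps $X\mapsto\theta(JX)J\theta^\sharp$ and $X\mapsto\omega(\theta^\sharp,X)J\theta^\sharp$ are not complex linear individually. Instead they cancel each other, because $\omega(\theta^\sharp,X)=-\theta(JX)$. Hence $T^b(\theta^\sharp,\cdot)\equiv 0$, and $\mathcal{L}_{\theta^\sharp}J=0$ if and only if $\nabla^b_{JX}\theta^\sharp=J\nabla^b_X\theta^\sharp$, as you claim.
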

\begin{proof}
   By Lemma~\ref{torsioncoclosed}, we have $\delta T^b=0$. It follows from
the first equation in \eqref{relations} that $\text{Ric}^b$ is symmetric.
A direct computation using~\eqref{bismutLC} shows that on an lcK manifold
we have
$$
\nabla^b\theta=\nabla\theta.
$$
Moreover, since $\theta$ is closed, $\nabla\theta$ is symmetric. We can
therefore rewrite the last two equations in \eqref{relations} as
\begin{equation}\label{relations2}
    \begin{split}
    \text{Ric}^b(JX,JY)-\text{Ric}^b(X,Y)
    &=(n-1)\bigl(-\nabla_{JX}\theta(JY)+\nabla_X\theta(Y)\bigr),\\
    \rho^b(JX,JY)-\rho^b(X,Y)
    &=0.
    \end{split}
\end{equation}
By \cite[Lemma~3]{MMO18}, the Lee form $\theta$ is holomorphic if and
only if $\nabla\theta$ is $J$-invariant. The result now follows
immediately from \eqref{relations2}.
\end{proof}
Finally, we recall that, on an lcK manifold, the Lee form is Levi--Civita parallel if and only if it is Bismut parallel:
\begin{thm}[{\cite[Theorem 3.6]{AV22}}]\label{BismutLCparallelAV}
    An lcK manifold is Vaisman if and only if $\nabla^b\theta=0$.
\end{thm}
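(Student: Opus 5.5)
The plan is to show directly from the explicit formula \eqref{bismutLC} that on any lcK manifold the Bismut and Levi--Civita covariant derivatives of the Lee form coincide:
$$\nabla^b\theta=\nabla\theta.$$
This identity was already used, without details, in the proof of the preceding proposition. Once it holds, $\nabla^b\theta=0$ if and only if $\nabla\theta=0$, which is the defining condition of a Vaisman metric. No compactness is needed.

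First I write $\nabla^b_X=\nabla_X+A_X$, where by \eqref{bismutLC}
$$A_XY=\tfrac12\bigl(\theta(JX)JY-\theta(JY)JX-g(JX,Y)J\theta^\sharp\bigr).$$
For a $1$-form the induced derivative is
$$(\nabla^b_X\theta)(Y)=(\nabla_X\theta)(Y)-\theta(A_XY).$$
So it suffices to show that $\theta(A_XY)=0$ for all $X,Y$. Applying $\theta$ term by term gives
$$\theta(A_XY)=\tfrac12\bigl(\theta(JX)\theta(JY)-\theta(JY)\theta(JX)-g(JX,Y)\,\theta(J\theta^\sharp)\bigr).$$
The first two terms cancel. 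The last term vanishes because $\theta(J\theta^\sharp)=g(\theta^\sharp,J\theta^\sharp)=0$, by the skew-symmetry of $J$ with respect to $g$. Hence $\nabla^b\theta=\nabla\theta$ identically.

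The only subtlety is the convention that a Vaisman metric requires $\theta\not\equiv0$. I would read the equivalence as being between \emph{Levi--Civita parallel} Lee form and Bismut parallel Lee form, which is exactly what the identity gives. If $\theta\equiv0$, both sides hold trivially and the metric is Kähler; this case should be excluded or noted, consistent with the usage in \cite{AV22}.

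The only real obstacle is keeping the sign conventions in \eqref{bismutLC} and the definition of $\omega=g(J\cdot,\cdot)$ consistent. The argument does not depend on the signs, however: every term of $\theta(A_XY)$ either cancels in pairs or contains the factor $\theta(J\theta^\sharp)=0$.
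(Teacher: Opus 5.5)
Your proof is correct and follows the route the paper takes implicitly. The paper cites the theorem from \cite{AV22}, but in the proof of the preceding proposition it records that a direct computation from \eqref{bismutLC} gives $\nabla^b\theta=\nabla\theta$ on any lcK manifold; your check that $\theta(A_XY)=0$ (equivalently $\iota_{\theta^\sharp}T^b=0$) is exactly that computation. Your caveat about the K\"ahler case $\theta\equiv 0$ is also right: under the paper's definition of Vaisman, the equivalence must be read for $\theta\not\equiv 0$.
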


\section{Gauduchon connections with parallel vector fields}

In this section, we fix a parameter $t\in\mathbb R$ and assume that $M$ admits a nontrivial vector field $V$ which is parallel with respect to the corresponding Gauduchon connection.
We establish here some results that will be used in what follows to prove Theorem~\ref{maintheorem} case by case.\\

Since every Gauduchon connection is Hermitian, we have
$$\nabla^tV=\nabla^tJV=0.$$
Moreover, since $\nabla^t$ is metric and $V$ is non-trivial, we may assume from now on that $V$ has unit length.

We decompose the Lee vector field $\theta^\sharp$ into its components along
$\langle V,JV\rangle$ and its orthogonal component:
\begin{equation}
\theta^\sharp=a\,V+b\,JV+\theta_0^\sharp,
\end{equation}
where
$$a:=\theta(V),
\qquad
b:=\theta(JV),$$
and $\theta_0^\sharp$ is orthogonal to both $V$ and $JV$.\\

Since $V$ is $\nabla^t$-parallel, its Levi--Civita covariant derivative can be written explicitly as 
\begin{equation}\label{parallel}
\begin{split}
    \nabla^g_YV=\frac{t}{2}\theta(JY)JV-\frac{t-1}{4}&bJY-\frac{t-1}{4}V^\flat(JY)J\theta^\sharp\\
    &+\frac{t+1}{4}aY-\frac{t+1}{4}V^\flat(Y)\theta^\sharp.
    \end{split}
    \end{equation}
Consequently, the Lie derivatives of the metric $g$ along $V$ and $JV$ are given by
\begin{equation}\label{tLiederivative}
\begin{split}
    \mathcal{L}_V g&=\frac{t+1}{4}\big(2ag-\theta\cdot V^\flat-J\theta\cdot JV^\flat\big),\\
    \mathcal{L}_{JV} g&=\frac{t+1}{4}\big(2bg-\theta\cdot JV^\flat+J\theta\cdot V^\flat\big),
    \end{split}
    \end{equation}
    where, for two $1$-forms $\alpha,\beta$, we set 
    $$\alpha\cdot\beta:=\alpha \otimes\beta+\beta\otimes \alpha.$$
Recall that, for a $1$-form $\alpha$, its exterior derivative is given by
$$d\alpha =\sum_{j=1}^{2n}e_j^\flat\wedge \nabla_{e_j}\alpha,$$
where $\{e_j\}_{j=1}^{2n}$ is a local orthonormal frame.
    Using \eqref{parallel}, we compute the exterior derivatives of $V^\flat$ and $JV^\flat$:
\begin{align}
    dV^\flat&=\frac{1-3t}{4}J\theta\wedge JV^\flat-\frac{t-1}{2}b\omega+\frac{t+1}{4}\theta\wedge V^\flat,\label{ddt1}\\
    dJV^\flat&=-\frac{1-3t}{4}J\theta\wedge V^\flat+\frac{t-1}{2}a\omega +\frac{t+1}{4}\theta\wedge JV^\flat.\label{ddt2}
\end{align}
Applying $d$ once again and using the identity $d^2=0$, we obtain
    \begin{align}
    0=d^2V^\flat&=\frac{1-3t}{4}dJ\theta\wedge JV^\flat-\frac{t-1}{2}\bigg(\frac{1-3t}{4}aJ\theta+db+\frac{3-t}{4}b\theta\bigg)\wedge \omega,\label{ddt3}\\
    0=d^2JV^\flat&=-\frac{1-3t}{4}dJ\theta\wedge V^\flat+\frac{t-1}{2}\bigg(-\frac{1-3t}{4}bJ\theta+da+\frac{3-t}{4}a\theta\bigg)\wedge \omega.\label{ddt4}
\end{align}
Since $V$ and $JV$ are $\nabla^t$-parallel, we have
\begin{align}\label{brackett}
    [V,JV]&=-T^t(V,JV)\overset{\ref{torsion-t}}{=}\frac{t-1}{2}\big(-bV+aJV-J\theta^\sharp\big)=-\frac{t-1}{2}J\theta_0^\sharp.
\end{align}
Consequently,
\begin{align}\label{dJthetatcase}
    dJ\theta(V,JV)=V(a)+JV(b)+\frac{t-1}{2}|\theta_0|^2.
\end{align}
Using \eqref{codifferential formula}, we compute the codifferentials of the $1$-forms $V^\flat$ and $JV^\flat$:
\begin{equation}\label{codifft}
    \begin{split}
        \delta V^\flat&=\frac{a}{2}(t+1)(1-n),\\
        \delta(JV^\flat)&=\frac{b}{2}(t+1)(1-n).
    \end{split}
\end{equation}
Using \eqref{covariantderivativeofJ} together with the identity
\begin{align*}
    \mathcal{L}_XJ(Y)&=\mathcal{L}_X JY-J\mathcal{L}_XY\\
    &=\nabla_XJ(Y)-\nabla_{JY}X+J\nabla_Y X,
\end{align*}
we compute the Lie derivative of $J$ along $V$:
    \begin{align}\label{LiederivativeofJ}
        \mathcal{L}_V J=\frac{1-t}{2}\big(JV\wedge \theta+V\wedge J\theta\big).
    \end{align}

The following lemmas will play a key role in the remainder of the paper.
\begin{lem}\label{theta0zeroonopenset}
    Suppose that $t\neq 1$, and let $U\subset M$ be an open subset on which
$\theta_0=0$. Then, on $U$, we have
    \begin{equation}\label{dalemmatheta0zero}
        da=-\frac{t+1}{2}a\theta, \qquad db=-\frac{t+1}{2}b\theta.
\end{equation}
    In particular, 
    \begin{equation}\label{V(a)theta0zero}
            V(a)=-\frac{t+1}{2}a^2, \qquad JV(b)=-\frac{t+1}{2}b^2.
    \end{equation}
    \end{lem}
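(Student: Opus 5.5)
The plan is to compute everything explicitly on $U$. There $\theta=aV^\flat+bJV^\flat$, and by the convention $(J\alpha)(X)=-\alpha(JX)$ we get $J\theta=aJV^\flat-bV^\flat$. We substitute these into \eqref{ddt1} and \eqref{ddt2}. Using $J\theta\wedge JV^\flat=-bV^\flat\wedge JV^\flat$, $\theta\wedge V^\flat=-bV^\flat\wedge JV^\flat$, and the analogous identities for the second equation, the coefficients combine (the $t$-dependence collapses since $\frac{1-3t}{4}+\frac{t+1}{4}=\frac{1-t}{2}$) to give
\begin{equation*}
dV^\flat=\frac{t-1}{2}\,b\,\Omega,\qquad dJV^\flat=-\frac{t-1}{2}\,a\,\Omega,\qquad \Omega:=V^\flat\wedge JV^\flat-\omega .
\end{equation*}
Here $-\Omega$ is the transverse part of $\omega$ on $\langle V,JV\rangle^\perp$. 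It has rank $2n-2$ and is nondegenerate there.

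Next we compute $d\Omega$ from these formulas together with $d\omega=\theta\wedge\omega$. The terms $dV^\flat\wedge JV^\flat$ and $V^\flat\wedge dJV^\flat$ produce $-\frac{t-1}{2}(aV^\flat+bJV^\flat)\wedge\omega=-\frac{t-1}{2}\theta\wedge\omega$. Since $\theta\wedge V^\flat\wedge JV^\flat=0$ on $U$, we have $\theta\wedge\omega=-\theta\wedge\Omega$. Altogether this should give
\begin{equation*}
d\Omega=\frac{t+1}{2}\,\theta\wedge\Omega .
\end{equation*}
Applying $d$ to the formulas for $dV^\flat$ and $dJV^\flat$ and dividing by $t-1\neq 0$ (this is where the hypothesis $t\ne1$ enters) yields
\begin{equation*}
\Big(db+\frac{t+1}{2}b\theta\Big)\wedge\Omega=0,\qquad \Big(da+\frac{t+1}{2}a\theta\Big)\wedge\Omega=0 .
\end{equation*}

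The main obstacle is that $\alpha\wedge\Omega=0$ does not force $\alpha=0$ when $n=2$: there $\Omega$ is only an area form on a $2$-plane. In every dimension, however, it does force the components of $\alpha$ along $V^\flat$ and $JV^\flat$ to vanish. This is because $V^\flat\wedge\Omega$ and $JV^\flat\wedge\Omega$ are linearly independent and have no transverse-only component to cancel against. The transverse components of $\theta$ vanish on $U$, so it remains to show that the transverse parts of $da$ and $db$ vanish. For this we use $d\theta=0$. By the formulas above, $a\,dV^\flat+b\,dJV^\flat=\frac{t-1}{2}(ab-ba)\Omega=0$, so closedness reduces to $da\wedge V^\flat+db\wedge JV^\flat=0$. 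Taking the part of type (transverse)$\wedge V^\flat$ and (transverse)$\wedge JV^\flat$ separately shows that the transverse parts of $da$ and $db$ vanish. (If one prefers to argue only for $n\ge3$ via nondegeneracy of $\Omega$, this step makes the argument dimension-free.) Combining both pieces of information gives \eqref{dalemmatheta0zero}.

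Finally, \eqref{V(a)theta0zero} follows by evaluating \eqref{dalemmatheta0zero} on $V$ and on $JV$ respectively, using $\theta(V)=a$ and $\theta(JV)=b$.
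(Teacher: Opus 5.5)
Your proposal is correct and follows essentially the same route as the paper. You rewrite $dV^\flat$ and $dJV^\flat$ on $U$ as multiples of $\omega-V^\flat\wedge JV^\flat$, differentiate and divide by $t-1$, use $d\theta=0$ to get $da\wedge V^\flat+db\wedge JV^\flat=0$ (so $da,db\in\langle V^\flat,JV^\flat\rangle$), and conclude from injectivity of wedging with $\omega-V^\flat\wedge JV^\flat$ on that span. Your direct computation of $d\Omega=\frac{t+1}{2}\theta\wedge\Omega$ simply spells out the step the paper compresses into ``the identities become''.
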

\begin{proof}
On $U$, identities \eqref{ddt1},\eqref{ddt2},\eqref{ddt3},\eqref{ddt4} become
 \begin{equation}\label{dV-dJV2}
     \begin{split}
         dV^\flat&=\frac{1-t}{2}b(\omega-V^\flat\wedge JV^\flat),\\
         dJV^\flat&=-\frac{1-t}{2}a(\omega-V^\flat\wedge JV^\flat),\\
         0=d^2V^\flat&=\frac{1-t}{2}\left(db+\frac{t+1}{2}b\theta\right)\wedge (\omega-V^\flat \wedge JV^\flat),\\
         0=d^2JV^\flat&=-\frac{1-t}{2}\left(da+\frac{t+1}{2}a\theta\right)\wedge (\omega-V^\flat \wedge JV^\flat).
     \end{split}
 \end{equation}
Using that $d\theta=0$ and $\theta=aV^\flat+bJV^\flat$, we obtain
\begin{equation}
    da \wedge V^\flat+db \wedge JV^\flat+adV^\flat+bdJV^\flat=0.
\end{equation}
By \eqref{dV-dJV2}, the last two terms cancel, hence
\begin{equation}\label{da-db-V-JV}
    da \wedge V^\flat + db \wedge JV^\flat=0.
\end{equation}
The equation above shows that $da, db \in \langle V^\flat, JV^\flat \rangle$. Since $t\neq 1$, $da$, $db$ and $\theta$ are in $\langle V^\flat, JV^\flat \rangle$ and wedging with $\omega - V^\flat \wedge JV^\flat$ is injective on $\langle V^\flat, JV^\flat \rangle$, from (\ref{dV-dJV2}) we conclude
\begin{equation*}
    da=-\frac{t+1}{2}a\theta, \qquad db=-\frac{t+1}{2}b\theta
\end{equation*}
at each point of $U$.
 \end{proof}

The next lemma relates $da$, $db$, $\theta$, and $J\theta$, assuming that $t \neq 1$.
\begin{lem}
If $t\ne 1$, then 
\begin{align}
    Jda&=db+\frac{t+1}{2}\big(b\theta-aJ\theta\big).\label{integralsab1}
\end{align}
\end{lem}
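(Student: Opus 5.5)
The plan is to express $da$ and $db$ as Lie derivatives and to eliminate the unknown $2$-form $dJ\theta$ using its type. Since $d\theta=0$, Cartan's formula gives
\begin{equation*}
da=d\big(\theta(V)\big)=\mathcal{L}_V\theta,\qquad db=d\big(\theta(JV)\big)=\mathcal{L}_{JV}\theta .
\end{equation*}
Similarly, $J\theta(V)=-\theta(JV)=-b$ and $J\theta(JV)=\theta(V)=a$, so
\begin{equation*}
\mathcal{L}_V(J\theta)=-db+\iota_V dJ\theta,\qquad \mathcal{L}_{JV}(J\theta)=da+\iota_{JV}dJ\theta .
\end{equation*}

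Next, I compare these with the Leibniz rule $\mathcal{L}_X(J\theta)=J\mathcal{L}_X\theta+(\text{term in }\mathcal{L}_XJ)$, applied with $X=V$ and $X=JV$. For $X=V$ I use the paper's formula \eqref{LiederivativeofJ}. For $X=JV$ I compute $\mathcal{L}_{JV}J$ in the same way, from \eqref{covariantderivativeofJ} and \eqref{parallel} with $JV$ in place of $V$. I expect
\begin{equation*}
\mathcal{L}_{JV}J=\tfrac{1-t}{2}\big(-V\wedge\theta+JV\wedge J\theta\big),
\end{equation*}
up to a sign that I will check. Evaluating these endomorphism terms on $\theta^\sharp$ produces only combinations of $a$, $b$, $|\theta|^2$, $V^\flat$, $JV^\flat$, $\theta$ and $J\theta$. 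This yields two identities:
\begin{equation*}
Jda=-db+\iota_V dJ\theta+E_1,\qquad Jdb=da+\iota_{JV}dJ\theta+E_2 ,
\end{equation*}
where $E_1,E_2$ are explicit $1$-forms carrying the factor $\frac{1-t}{2}$.

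The key step is to kill the unknown terms $\iota_V dJ\theta$ and $\iota_{JV}dJ\theta$. For this I use that $dJ\theta$ is of type $(1,1)$, as recalled in the preliminaries, so $dJ\theta(JX,JY)=dJ\theta(X,Y)$. This gives $\iota_{JV}dJ\theta=\pm J(\iota_V dJ\theta)$, with the sign fixed by the paper's convention $(J\alpha)(X)=-\alpha(JX)$. Applying $J$ to the second identity and adding it to or subtracting it from the first then eliminates $dJ\theta$. Since $J^2=-1$ on $1$-forms, this leaves $2Jda$ expressed through $db$, $E_1$ and $JE_2$.

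The main obstacle is the bookkeeping in the $E_i$. After the $dJ\theta$-terms cancel, the terms involving $|\theta|^2V^\flat$, $|\theta|^2JV^\flat$ and the mixed $a\,JV^\flat$, $b\,V^\flat$ must also cancel. What remains should be exactly $\frac{t+1}{2}(b\theta-aJ\theta)$, after combining the factor $\frac{1-t}{2}$ from $\mathcal{L}J$ with the contributions of the $\frac{t+1}{4}$ coefficients in \eqref{parallel}. The hypothesis $t\neq1$ may enter when dividing out a factor $(1-t)$ in this step. I would sanity-check the result against Lemma~\ref{theta0zeroonopenset}: when $\theta_0=0$, substituting $da=-\frac{t+1}{2}a\theta$ and $db=-\frac{t+1}{2}b\theta$ into \eqref{integralsab1} must give an identity. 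If it does not, I would fall back on contracting \eqref{ddt3} and \eqref{ddt4} with $\omega^{n-2}$, or with $V$ and $JV$, to extract the same relation.
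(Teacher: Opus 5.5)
Your elimination step fails, and the failure is structural rather than a matter of bookkeeping. Since $dJ\theta$ is $J$-invariant, $(\iota_{JV}dJ\theta)(Y)=dJ\theta(J^2V,JY)=-(\iota_VdJ\theta)(JY)$. With the paper's convention this means $\iota_{JV}dJ\theta=+J(\iota_VdJ\theta)$.

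Applying $J$ to your second identity therefore gives $-db=Jda-\iota_VdJ\theta+JE_2$, i.e.\ $Jda=-db+\iota_VdJ\theta-JE_2$. This is your first identity with $E_1$ replaced by $-JE_2$. Subtracting the two removes $\iota_VdJ\theta$, but it removes $Jda$ and $db$ along with it and leaves only $E_1+JE_2=0$. Adding them keeps $\iota_VdJ\theta$.

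The leftover relation is also vacuous. With your (correct) formula for $\mathcal{L}_{JV}J$ one finds $E_1=\frac{1-t}{2}(|\theta|^2JV^\flat-b\theta-aJ\theta)$ and $E_2=\frac{1-t}{2}(a\theta-bJ\theta-|\theta|^2V^\flat)$, so $E_1+JE_2\equiv0$. Your two Lie-derivative identities are thus one identity, namely \eqref{contractionbyVofdJtheta}. It merely expresses the unknown $\iota_VdJ\theta$ through $da$ and $db$ and puts no constraint on $da,db$.

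This cannot be fixed by rearranging. Cartan's formula, $d\theta=0$, the first-order formula \eqref{parallel} and the $(1,1)$-type of $dJ\theta$ impose no condition on the symmetric tensor $\nabla\theta$. By contrast, \eqref{integralsab1} fixes the $J$-anti-invariant part of $\nabla\theta$ contracted with $V$. That is an integrability condition, and it needs second derivatives of $V$.

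The missing input is $d^2V^\flat=0$ and $d^2JV^\flat=0$, i.e.\ \eqref{ddt3} and \eqref{ddt4}. Your idea of cancelling $dJ\theta$ through its $(1,1)$-type is the right one, but it has to be applied to these identities. $J$ is multiplicative on forms, with $J\omega=\omega$, $J(V^\flat)=JV^\flat$, $J(J\theta)=-\theta$ and $J(dJ\theta)=dJ\theta$. So applying $J$ to \eqref{ddt4} turns its first term into $-\frac{1-3t}{4}dJ\theta\wedge JV^\flat$, which cancels the first term of \eqref{ddt3} when the two are added. What remains is
\[
\frac{t-1}{2}\Big(Jda-db-\frac{t+1}{2}\big(b\theta-aJ\theta\big)\Big)\wedge\omega=0 .
\]
Dividing by $\frac{t-1}{2}$ (this is where $t\neq1$ enters) and using that $\alpha\mapsto\alpha\wedge\omega$ is injective on $1$-forms gives \eqref{integralsab1}. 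Your fallback mentions \eqref{ddt3} and \eqref{ddt4} but not this step, so as written it does not constitute a proof.
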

\begin{proof}
    Applying $J$ to \eqref{ddt4}, and using that $dJ\theta$ is a $(1,1)$-form, we obtain
    \begin{align*}
            0=-\frac{1-3t}{4}dJ\theta\wedge JV^\flat+\frac{t-1}{2}\bigg(\frac{1-3t}{4}b\theta+Jda+\frac{3-t}{4}aJ\theta\bigg)\wedge \omega.
    \end{align*}
Adding this identity to
\eqref{ddt3}, we find
    \begin{align*}
        0&=\frac{t-1}{2}\bigg(\frac{t+1}{2}aJ\theta-\frac{t+1}{2}b\theta+Jda-db\bigg)\wedge \omega.
    \end{align*}
    Since $t\neq1$ and wedging a $1$-form with $\omega$ is injective, we obtain
\eqref{integralsab1}.
\end{proof}
We conclude this section by establishing the first relation among $V^\flat,a,b$ and $\theta$.
\begin{lem}\label{Lemmaforf}
    If $t\ne 1$ and $n>2$ then 
\begin{align}\label{fV}
    fV^\flat&=da-\frac{1-3t}{4}bJ\theta+\frac{3-t}{4}a\theta,
\end{align}
where $f$ is the smooth function defined by
\begin{equation}\label{fVprime}
    f:=V(a)+\frac{1-3t}{4}b^2+\frac{3-t}{4}a^2.
\end{equation}
\end{lem}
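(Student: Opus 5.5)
The plan is to read \eqref{fV} off from \eqref{ddt4}: first show that the $1$-form on the right-hand side of \eqref{fV} is pointwise proportional to $V^\flat$, then identify the proportionality factor by evaluating on $V$. Set
$$
\beta:=da-\frac{1-3t}{4}bJ\theta+\frac{3-t}{4}a\theta .
$$
Then \eqref{ddt4} reads
$$
\frac{t-1}{2}\,\beta\wedge\omega=\frac{1-3t}{4}\,dJ\theta\wedge V^\flat .
$$

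\emph{Step 1: eliminate the term with $dJ\theta$.} Wedge the identity above with $V^\flat$. The right-hand side becomes $\frac{1-3t}{4}\,dJ\theta\wedge V^\flat\wedge V^\flat=0$. Since $t\neq 1$, we obtain
$$
(\beta\wedge V^\flat)\wedge\omega=0 .
$$

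\emph{Step 2: Lefschetz injectivity.} The Lefschetz map $L=\omega\wedge\cdot\,:\Lambda^k\to\Lambda^{k+2}$ is injective for $k\le n-1$. We apply it with $k=2$, which requires exactly $n\ge 3$, i.e.\ the hypothesis $n>2$. One can justify this with the standard $\mathfrak{sl}_2$ argument, or by a direct check in an adapted unitary frame, writing a $2$-form as primitive part plus a multiple of $\omega$. It follows that $\beta\wedge V^\flat=0$ pointwise. Since $V^\flat$ is nowhere vanishing, $\beta=fV^\flat$ for a function $f$.

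\emph{Step 3: identify $f$.} Because $|V|=1$, we have $f=\beta(V)$, so $f$ is smooth. With the convention $(J\alpha)(X)=-\alpha(JX)$ we get $J\theta(V)=-\theta(JV)=-b$, and $\theta(V)=a$. Hence
$$
\beta(V)=V(a)+\frac{1-3t}{4}b^2+\frac{3-t}{4}a^2,
$$
which is \eqref{fVprime}.

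The only genuinely non-routine point is Step~2: injectivity of $\omega\wedge\cdot$ on $2$-forms, and hence the restriction $n>2$. For $n=2$ the map $\Lambda^2\to\Lambda^4$ has a kernel (the primitive $2$-forms), so the argument breaks down there. The remaining steps are sign bookkeeping with the paper's convention for $J$ acting on forms.
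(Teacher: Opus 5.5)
Your proposal is correct and follows the paper's proof essentially verbatim. The paper likewise wedges \eqref{ddt4} with $V^\flat$ to kill the $dJ\theta$ term, uses $t\neq 1$ together with injectivity of $\omega\wedge\cdot$ on $2$-forms for $n>2$ to conclude that the bracketed $1$-form is a multiple of $V^\flat$, and then evaluates on $V$ (with $J\theta(V)=-b$) to identify $f$.
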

\begin{proof}
Since $t\ne 1$, by wedging \eqref{ddt4} with $V^\flat$ we obtain
$$0=\bigg(-\frac{1-3t}{4}bJ\theta+da+\frac{3-t}{4}a\theta\bigg)\wedge V^\flat\wedge \omega.$$
Since $n>2$, wedging with $\omega$ is injective on $2$-forms. Therefore,
$$0=\bigg(-\frac{1-3t}{4}bJ\theta+da+\frac{3-t}{4}a\theta\bigg)\wedge V^\flat.$$
Hence, there exists a smooth function $f$ such that
\begin{align*}
    fV^\flat&=da-\frac{1-3t}{4}bJ\theta+\frac{3-t}{4}a\theta.
\end{align*}
Applying this equality to $V$ gives \eqref{fVprime}.
\end{proof}

\section{The generic case \texorpdfstring{$t\neq \pm 1,\frac{1}{3}$}{t != +/- 1, 1/3}}

In this section, we assume that $t \neq \pm 1, \frac{1}{3}$ and prove  Theorem~\ref{maintheorem} under this assumption.
We distinguish two cases according to the complex dimension of $M$. We first treat the case $n>2$, and then the case $n=2$.\\

Assume that $n>2$. We begin by establishing, in Lemma \ref{Propositiontda}, a new relation involving $a$, $b$, $da$, $V^\flat$, and $\theta$, which is independent of the one obtained in Lemma \ref{Lemmaforf}. These two identities will be the key ingredients in proving that $a=b=0$. Once this is established, a straightforward, although nontrivial argument shows that $\theta\equiv 0$.

\begin{lem}\label{Propositiontda}
    Assume that $t\neq1$. Then there exists a smooth function $C\in C^\infty(M)$
such that
    \begin{align}\label{tda}
        tda=CV^\flat+\frac{(1-3t)(1+t)}{8}bJ\theta-\frac{(t+1)^2}{8}a\theta.
    \end{align}
\end{lem}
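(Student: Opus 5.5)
The plan is to compute the $1$-form $\iota_V dJ\theta$ in two independent ways. Comparing the two expressions should give a $1$-form identity for $da$ that is determined only up to a multiple of $V^\flat$. Unlike Lemma~\ref{Lemmaforf}, this route never divides by $\omega$ on $2$-forms, which is why no restriction $n>2$ should be needed.

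\emph{First computation (Lie derivatives).} Since $d\theta=0$, Cartan's formula gives $\mathcal L_V\theta=d(\theta(V))=da$. We then apply $J$ and commute it with $\mathcal L_V$:
\[
Jda=J\mathcal L_V\theta=\mathcal L_V(J\theta)-(\mathcal L_VJ)\cdot\theta ,
\]
where $(\mathcal L_VJ)\cdot\theta$ denotes the induced action on $1$-forms. By Cartan's formula, $\mathcal L_V(J\theta)=d\big(J\theta(V)\big)+\iota_V dJ\theta$, and $J\theta(V)=\pm b$ depending on the sign convention for $J$ on forms. The term $(\mathcal L_VJ)\cdot\theta$ is explicit by \eqref{LiederivativeofJ}: it equals $\frac{1-t}{2}$ times a combination of $|\theta|^2V^\flat$, $a\theta$, $bJ\theta$ and $aJ\theta$-type terms. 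Substituting \eqref{integralsab1} (available since $t\neq1$) for $Jda$ then expresses $\iota_V dJ\theta$ as a combination of $db$, $b\theta$, $aJ\theta$, $a\,J\theta$-type terms and $V^\flat$.

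\emph{Second computation (from $d^2=0$).} We contract \eqref{ddt3} with $JV$, or \eqref{ddt4} with $V$, and then contract once more with the appropriate vector so as to reach a $1$-form. The term $dJ\theta\wedge V^\flat$ produces $\iota_V dJ\theta$, plus terms in $dJ\theta(V,JV)$, which are controlled by \eqref{dJthetatcase}, and multiples of $V^\flat$. The wedge with $\omega$ produces the $1$-form $\alpha=-\frac{1-3t}{4}bJ\theta+da+\frac{3-t}{4}a\theta$ together with its $J$-image and multiples of $V^\flat$, $JV^\flat$. Here the coefficients $\frac{1-3t}{4}$ and $\frac{t-1}{2}$ should combine so that the coefficient of $da$ comes out proportional to $t$.

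Finally, we equate the two expressions for $\iota_V dJ\theta$. We use \eqref{integralsab1} once more to replace $db$ by $Jda-\frac{t+1}{2}(b\theta-aJ\theta)$. All terms that are scalar multiples of $V^\flat$ are collected into one smooth function $C$. Any leftover multiple of $JV^\flat$ is eliminated by evaluating the identity on $JV$ and using \eqref{integralsab1} evaluated on $V$. What should remain is $t\,da$ on one side and $\frac{(1-3t)(1+t)}{8}bJ\theta-\frac{(t+1)^2}{8}a\theta$ on the other. The main obstacle is the bookkeeping in the second computation: choosing the contraction of \eqref{ddt3}/\eqref{ddt4} that isolates $\iota_V dJ\theta$ without producing new unknown quantities such as $\iota_{JV}dJ\theta$. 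If that fails, I would instead contract \eqref{ddt3} and \eqref{ddt4} each with the trace against $\omega$, i.e.\ using $\sum_j(\cdot)(e_j,Je_j)$ together with \eqref{again}, to eliminate the $dJ\theta$ terms.
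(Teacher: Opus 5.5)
\textbf{The gap is in your second computation.} Your first computation of $\iota_VdJ\theta$ (via $\mathcal L_V\theta=da$, \eqref{LiederivativeofJ} and Cartan's formula) is exactly the paper's, and the coefficient of $da$ does come out as $t$. The problem is that contracting \eqref{ddt3} twice throws away the one equation the paper relies on.

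The paper contracts \eqref{ddt3} with $V$ only once. This gives a $2$-form identity
\[
JV^\flat\wedge\gamma+c\,\omega=0,\qquad \gamma=\tfrac{3t-1}{4}\,\iota_VdJ\theta-\tfrac{t-1}{2}\beta,\qquad c=-\tfrac{t-1}{2}\beta(V)=-\tfrac{t-1}{2}\Big(V(b)+\tfrac{t+1}{2}ab\Big),
\]
where $\beta=\frac{1-3t}{4}aJ\theta+db+\frac{3-t}{4}b\theta$. Since $\omega$ has rank $2n\ge 4$ and $JV^\flat\wedge\gamma$ has rank at most $2$, this forces $c=0$ and $\gamma\wedge JV^\flat=0$. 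Substituting the first computation and \eqref{integralsab1} into $\gamma$ and applying $J$ then gives \eqref{tda}.

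A second contraction only retains $\gamma=c\,V^\flat+\gamma(JV)\,JV^\flat$. It loses precisely the scalar equation $c=0$. After applying $J$, the term $c\,V^\flat$ becomes, up to sign, the leftover multiple of $JV^\flat$ that you then try to remove.

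\textbf{The proposed removal step does not work as stated.} Evaluating the final identity on $JV$ only identifies the leftover coefficient as $t\big(JV(a)+\frac{t+1}{2}ab\big)$. Evaluating \eqref{integralsab1} on $V$ only gives $V(b)+JV(a)=-(t+1)ab$, which does not determine $JV(a)$. The missing relation is $V(b)=JV(a)$.

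In your scheme this relation enters only through $dJ\theta(V,V)=0$ applied to the first computation. There it is multiplied by the coefficient $\frac{3t-1}{4}$ of $dJ\theta$ in \eqref{ddt3}. Tracking all terms, the best you get is $(3t-1)\big(JV(a)+\frac{t+1}{2}ab\big)=0$, which is empty at $t=\frac13$, a value the lemma covers.

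\textbf{How to close it.} Either of the following works:
\begin{itemize}
\item Keep the single contraction and use the rank argument above.
\item Use directly that $V(b)-JV(a)=\theta([V,JV])=0$, by $d\theta=0$ and \eqref{brackett}, since $J\theta_0^\sharp\perp\theta^\sharp$. Equivalently, evaluate your first formula for $\iota_VdJ\theta$ on $V$ by itself. Together with \eqref{integralsab1} at $V$, this gives $V(b)=JV(a)=-\frac{t+1}{2}ab$, hence $c=0$, for every $t\neq1$.
\end{itemize}
Alternatively, treat $t=\frac13$ separately: there \eqref{ddt4} alone gives $da=-\frac23a\theta$, which is \eqref{tda} with $C=0$.
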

\begin{proof}
    Using Cartan's formula and the fact that $\theta$ is closed, we obtain
    $$\mathcal{L}_V\theta=d\iota_V\theta=d\big(\theta(V)\big)=da.$$
    Then, the Lie derivative of $J\theta$ along $V$ is given by
\begin{equation}\label{LieVJtheta section notpm1,1over3}
    \begin{split}
        \mathcal{L}_{V} J\theta&=\mathcal{L}_V J(\theta)+Jda\\
        &\overset{\ref{LiederivativeofJ}}{=}Jda+\frac{1-t}{2}\big(b\theta-|\theta|^2JV^\flat+aJ\theta\big).
    \end{split}
\end{equation}
    Applying Cartan's formula once again, we compute the contraction of $dJ\theta$ with $V$:
    \begin{equation}\label{contractionbyVofdJtheta}
        \begin{split}
            \iota_VdJ\theta&=\mathcal{L}_V J\theta-d\iota_VJ\theta\\
        &\overset{\ref{LieVJtheta section notpm1,1over3}}{=}Jda+\frac{1-t}{2}\big(b\theta-|\theta|^2JV^\flat+aJ\theta\big)+db.
        \end{split}
    \end{equation}
Contracting (\ref{ddt3}) with $V$ and using \eqref{contractionbyVofdJtheta}, we obtain
\begin{equation*}
    \begin{split}
        0&=\frac{3t-1}{4} JV^\flat\wedge \bigg(Jda+\frac{1-t}{2}\big(b\theta+aJ\theta\big)+db\bigg)\\
        &+\frac{t-1}{2}JV^\flat\wedge\bigg(-db+\frac{t-3}{4}b\theta+\frac{3t-1}{4}aJ\theta\bigg)\\
        &+\frac{t-1}{2}\omega\wedge\bigg(-V(b)-\frac{t+1}{2}ab\bigg).
    \end{split}
\end{equation*}
This expression immediately implies that the last term vanishes, and therefore
\begin{equation*}
    \begin{split}
        0&=\frac{3t-1}{4} JV^\flat\wedge \bigg(Jda+\frac{1-t}{2}\big(b\theta+aJ\theta\big)+db\bigg)\\
        &+\frac{t-1}{2}JV^\flat\wedge\bigg(-db+\frac{t-3}{4}b\theta+\frac{3t-1}{4}aJ\theta\bigg).
    \end{split}
\end{equation*}
Using \eqref{integralsab1} in the above equation, we deduce that
     $$0=JV^\flat\wedge \left(tJda+\frac{(1-3t)(1+t)}{8}b\theta+\frac{(t+1)^2}{8}aJ\theta\right).$$
Hence, there exists a smooth function $C$ such that
    \begin{align}\label{CJV}
        CJV^\flat=tJda+\frac{(1-3t)(1+t)}{8}b\theta+\frac{(t+1)^2}{8}aJ\theta.
    \end{align}
Applying $J$ to \eqref{CJV}, we obtain \eqref{tda}.
\end{proof}

Combining Lemmas \ref{Lemmaforf} and \ref{Propositiontda}, we obtain the following proposition.
\begin{prop}\label{a=b=00}
    If $n>2$ and $t\ne \pm 1,\frac{1}{3}$, then $a=b=0$.
\end{prop}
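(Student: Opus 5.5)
The plan is to eliminate $da$ between Lemma~\ref{Lemmaforf} and Lemma~\ref{Propositiontda}. This gives a pointwise algebraic constraint forcing $\theta_0=0$ wherever $(a,b)\neq(0,0)$. Lemma~\ref{theta0zeroonopenset} then yields a differential identity for $a$ and $b$, and an integration of the codifferentials \eqref{codifft} finishes the proof.

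\emph{Step 1: eliminate $da$.} By Lemma~\ref{Lemmaforf} (valid since $t\neq1$, $n>2$),
\[
da=fV^\flat+\tfrac{1-3t}{4}\,bJ\theta-\tfrac{3-t}{4}\,a\theta .
\]
I multiply this by $t$ and subtract \eqref{tda}. The coefficient of $bJ\theta$ becomes $\frac{1-3t}{4}\big(t-\frac{1+t}{2}\big)=\frac{(1-3t)(t-1)}{8}$. The coefficient of $a\theta$ becomes $-\frac{t(3-t)}{4}+\frac{(t+1)^2}{8}=\frac{(3t-1)(t-1)}{8}$. Hence
\[
(tf-C)V^\flat+\frac{(1-3t)(t-1)}{8}\big(bJ\theta-a\theta\big)=0 .
\]
Since $t\neq1,\frac13$, the form $a\theta-bJ\theta$ lies pointwise in $\langle V^\flat\rangle$.

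Next I project orthogonally to $\langle V^\flat,JV^\flat\rangle$. This projection kills $V^\flat$, and $\theta$ and $J\theta$ project to $\theta_0$ and $J\theta_0$. The constraint therefore gives $a\theta_0-bJ\theta_0=0$. Since $\theta_0\perp J\theta_0$ and $|J\theta_0|=|\theta_0|$, taking norms gives $(a^2+b^2)|\theta_0|^2=0$. So $\theta_0\equiv0$ on the open set $W:=\{a^2+b^2\neq0\}$.

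\emph{Step 2: a global identity for $V(a)$ and $JV(b)$.} On $W$, Lemma~\ref{theta0zeroonopenset} (again $t\neq1$) gives $V(a)=-\frac{t+1}{2}a^2$ and $JV(b)=-\frac{t+1}{2}b^2$. On the interior of $M\setminus W$ we have $a=b=0$, so both identities hold trivially. The union of $W$ and this interior is dense in $M$, and both sides are continuous, so the identities hold on all of $M$. This density and continuity point is the only delicate step, and it is where I expect care to be needed.

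\emph{Step 3: integrate.} By \eqref{codifferentialfunctiononeform} and \eqref{codifft},
\[
\delta(aV^\flat)=-V(a)+a\,\delta V^\flat=\frac{t+1}{2}a^2-\frac{(t+1)(n-1)}{2}a^2=-\frac{(t+1)(n-2)}{2}a^2 .
\]
Integrating over the compact manifold $M$ gives $\frac{(t+1)(n-2)}{2}\int_M a^2=0$. Since $t\neq-1$ and $n>2$, this forces $a\equiv0$. The same computation with $\delta(bJV^\flat)$ gives $b\equiv0$.
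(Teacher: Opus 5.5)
Your proof is correct and follows the paper's argument. You combine \eqref{fV} with \eqref{tda} to get $a\theta_0=b\theta_0=0$, extend $V(a)=-\frac{t+1}{2}a^2$ and $JV(b)=-\frac{t+1}{2}b^2$ to all of $M$ via Lemma~\ref{theta0zeroonopenset} and density, then integrate $\delta(aV^\flat)$ and $\delta(bJV^\flat)$ using \eqref{codifft}. The one small difference is that the paper evaluates on $\theta_0^\sharp$ to get $a|\theta_0|^2=0$ and says the same argument applies to $b$, whereas your projection onto $\langle V^\flat,JV^\flat\rangle^\perp$ also reads off $b|\theta_0|^2=0$ from the $J\theta_0$-component of the same identities, so you never need $b$-analogues of Lemmas~\ref{Lemmaforf} and~\ref{Propositiontda}.
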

\begin{proof}
    Evaluating \eqref{fV} on $\theta_0^\sharp$, we obtain
    \begin{align}\label{firstthetazeroona}
        t\theta_0^\sharp(a)&=-\frac{(t+1)^2}{8}a|\theta_0|^2.
    \end{align}
Applying \eqref{tda} to $\theta_0^\sharp$ and multiplying the resulting identity by $t$, we find
\begin{equation}\label{secondthetazeroona}
    t\theta_0^\sharp(a)=\frac{t(t-3)}{4}a|\theta_0|^2.
\end{equation}
Since 
\begin{align}
    -\frac{(t+1)^2}{8}=\frac{t(t-3)}{4}
\end{align}
holds only for $t=1,\frac{1}{3}$, which are excluded by assumption, equations \eqref{firstthetazeroona} and \eqref{secondthetazeroona} imply that $$a\theta_0=0.$$ Applying the same argument to $b$, we obtain $$b\theta_0=0.$$
Let
$$T:=\{\theta_0\neq 0\}$$
be the open subset on which $\theta_0$ does not vanish, and let $U$ denote the interior of $M\setminus T$. Since $a\theta_0=b\theta_0=0$, we have $a=b=0$ on $T$. Therefore, \eqref{dalemmatheta0zero} and \eqref{V(a)theta0zero} hold on $T$. On the other hand, since $\theta_0=0$ on $U$, they also hold on $U$. As 
$$U \cup T$$
is dense in $M$, we conclude that \eqref{V(a)theta0zero}, namely
$$V(a)=-\frac{t+1}{2}a^2, \qquad JV(b)=-\frac{t+1}{2}b^2,$$
holds globally. Integrating the above identities over $M$ and using \eqref{codifft}, we obtain
$$\frac{(t+1)(1-n)}{2}\int a^2=-\frac{t+1}{2}\int a^2, \qquad \frac{(t+1)(1-n)}{2}\int b^2=-\frac{t+1}{2}\int b^2.$$
Therefore,
$$\frac{(t+1)(2-n)}{2}\int a^2=\frac{(t+1)(2-n)}{2}\int b^2=0.$$
Since $n>2$ and $t\ne -1$, we conclude that $a=b=0$.
\end{proof}
By Proposition \ref{a=b=00} we have $a=b=0$. Hence, \eqref{ddt3}, together with \eqref{ddt4} and \eqref{dJthetatcase}, implies that
\begin{align}\label{djthetatne1-113}
    dJ\theta=\frac{t-1}{2}|\theta_0|^2 V^\flat\wedge JV^\flat.
\end{align}
Differentiating \eqref{djthetatne1-113} and using \eqref{ddt1} and \eqref{ddt2}, together with the fact that $a=b=0$, we obtain
\begin{align*}
    0&=\frac{2}{t-1}d^2J\theta=d\big(|\theta|^2V^\flat\wedge JV^\flat\big)\\
    &=d\big(|\theta|^2\big)\wedge V^\flat\wedge JV^\flat+|\theta|^2\big(dV^\flat\wedge JV^\flat-V^\flat\wedge dJV^\flat\big)\\
    &\overset{\eqref{ddt1},\eqref{ddt2}}{=} \bigg(d\big(|\theta|^2\big)+\frac{t+1}{2}|\theta|^2\theta\bigg)\wedge V^\flat\wedge JV^\flat.
\end{align*}
Hence,
\begin{align}\label{maximumpoint}
    d\big(|\theta|^2\big)+\frac{t+1}{2}|\theta|^2\theta=0.
\end{align}
Since $M$ is compact, the function $|\theta|^2$ attains its maximum at some point $x\in M$. At this point, $d\big(|\theta|^2\big)(x) = 0$. As $t\neq -1$, equation \eqref{maximumpoint} then implies that $|\theta|^2(x)=0$. Since $x$ is a maximum point of the nonnegative function $|\theta|^2$, it follows that $|\theta|^2\equiv0$ on $M$. Therefore, $\theta\equiv 0$.\\

\vspace{3mm}

Assume now that $n=2$. We first prove that $\theta_0\equiv 0$. Wedging \eqref{ddt4} with $JV^\flat$ yields
\begin{align*}
    0&=-\frac{1-3t}{4}dJ\theta\wedge V^\flat\wedge JV^\flat+\frac{t-1}{2}\bigg(\frac{1-3t}{4}b^2+V(a)+\frac{3-t}{4}a^2\bigg)V^\flat\wedge JV^\flat\wedge \omega.
\end{align*}
     Therefore, multiplying the above equation by $\frac{4}{1-3t}$ we obtain
\begin{align*}
    0=\bigg(-dJ\theta+\bigg(\frac{t-1}{2}b^2+\frac{t-1}{1-3t}2V(a)+\frac{(t-1)(3-t)}{2(1-3t)}a^2\bigg)\omega \bigg)\wedge V^\flat\wedge JV^\flat.
\end{align*}
Combining this identity with \eqref{dJthetatcase}, we obtain that, if $V,JV,e,Je$ is a local orthonormal frame, then
\begin{equation*}
    \begin{split}
        dJ\theta(V,JV)+dJ\theta(e,Je)=&-\frac{t+1}{1-3t}V(a)+\frac{(t-1)(3-t)}{2(1-3t)}a^2\\
        &+JV(b)+\frac{t-1}{2}b^2+\frac{t-1}{2}|\theta_0|^2.
    \end{split}
\end{equation*}
Using  (\ref{again}), we find
\begin{align*}
    -\delta \theta&=-\frac{t+1}{1-3t}V(a)-\frac{(t+1)^2}{2(1-3t)}a^2+JV(b)+\frac{t+1}{2}b^2+\frac{t+1}{2}|\theta_0|^2\\
    &=-\frac{t+1}{1-3t}\bigg(V(a)+\frac{t+1}{2}a^2\bigg)+\bigg(JV(b)+\frac{t+1}{2}b^2\bigg)+\frac{t+1}{2}|\theta_0|^2.
\end{align*}
Integrating the above identity over $M$ and using \eqref{codifft}, we obtain $\int |\theta_0|^2=0$. Hence $\theta_0\equiv 0$. 
We claim that $\delta\theta=0$. Indeed, by Lemma \ref{theta0zeroonopenset} and (\ref{codifft}), 
$$\delta(aV^\flat)=-V(a)+a\delta V^\flat=0.$$
The same computation yields $$\delta(bJV^\flat)=0.$$
Therefore $\delta \theta=0$. It follows that
\begin{align*}
    0&=\int \big<\delta\theta,a^2\big>=\int 2a\theta^\sharp(a)\\
    &\overset{\ref{theta0zeroonopenset}}{=}-(t+1)\int a^2|\theta|^2=-(t+1)\int a^4+a^2b^2,
\end{align*}
and therefore $a=0$. The same argument shows that $b=0$, and hence $\theta\equiv 0$.

\section{The holonomy of the Bismut connection}

In this section, we prove Theorem \ref{maintheorem} for the case $t=-1$. Recall that the Bismut connection $\nabla^b$ is the Gauduchon connection corresponding to the parameter $t=-1$.\\

Our proof is divided into two cases. First, we assume that $\theta_0\not\equiv 0$. The main ingredient is the fact that the Lie bracket of Killing vector fields is again a Killing vector field. Starting from the Killing vector fields $V$ and $JV$ (which are Killing by \eqref{tLiederivative}), we repeatedly take Lie brackets to construct new Killing vector fields.
We then use the fact that the non-vanishing set of a nontrivial Killing vector field is dense on a connected Riemannian manifold. This will imply that the functions $a$, $b$, and $|\theta_0|$ are constant. We will show that this case cannot occur in complex dimension greater than two. In complex dimension two, however, it implies that the metric is Vaisman.
The second case is $\theta_0\equiv 0$. In this situation, we again prove that $a$ and $b$ are constant, from which it follows that the manifold is Vaisman.\\

\textbf{Case $\theta_0\not\equiv 0$. } 
Since $V$ and $JV$ are Bismut-parallel, they are Killing vector fields by \eqref{tLiederivative}. We recall that the Lie derivative along a Killing vector field preserves metric duality. Consequently, Cartan's formula can be applied interchangeably to vector fields and to their metric dual $1$-forms along Killing vector fields.
We will also use the fact that, on a connected Riemannian manifold, the set of points where a nontrivial Killing vector field does not vanish is dense \cite{Kob58}. Moreover, the Lie bracket of two Killing vector fields is again a Killing vector field. In particular, $J\theta_0^\sharp$ is a Killing vector field. Indeed, \eqref{brackett} immediately yields
\begin{equation}\label{[V,JV]}
        \mathcal{L}_V JV=J\theta_0^\sharp.
\end{equation}
Furthermore,
\begin{equation}\label{A}
    \begin{split}
        \mathcal{L}_V J\theta_0^\sharp&=-\mathcal{L}_{J\theta_0^\sharp} V\\
        &=-\big(\mathcal{L}_{J\theta_0^\sharp} V^\flat\big)^\sharp\\
    &=-\big(d\iota_{J\theta_0^\sharp} V^\flat+\iota_{J\theta_0^\sharp}dV^\flat\big)^\sharp\\
    &=-\big(\iota_{J\theta_0^\sharp}dV^\flat\big)^\sharp\\
    &\overset{\eqref{ddt1}}{=}-\big(\iota_{J\theta_0^\sharp}(J\theta\wedge JV^\flat+b\omega)\big)^\sharp\\
    &=-|\theta_0|^2JV +b\theta_0^\sharp,
    \end{split}
\end{equation}
and similarly,
    \begin{align}\label{B}
        \mathcal{L}_{JV} J\theta_0^\sharp&=|\theta_0|^2V-a\theta_0^\sharp
    \end{align}
are Killing vector fields.
Taking the Lie derivative once more along $V$ and along $JV$ of the above Killing vector fields, we obtain new Killing vector fields:
\begin{equation}\label{1}
    \begin{split}
        \mathcal{L}_V \big(b\theta_0^\sharp-|\theta_0|^2JV\big)&=-\mathcal{L}_{b\theta_0^\sharp-|\theta_0|^2JV} V\\
    &\overset{\eqref{ddt1}}{=}-\big(\iota_{b\theta_0^\sharp-|\theta_0|^2JV}(J\theta\wedge JV^\flat+b\omega)\big)^\sharp\\
    &=-\big(-a|\theta_0|^2JV+|\theta_0|^2J\theta^\sharp+b^2J\theta_0^\sharp+b|\theta_0|^2V\big)\\
    &=-\big(|\theta_0|^2+b^2\big)J\theta_0^\sharp,
    \end{split}
\end{equation}
where in the last equality we used the identity $J\theta^\sharp=aJV-bV+J\theta_0^\sharp$,
and similarly,
    \begin{align}
    \mathcal{L}_V\big(-a\theta_0^\sharp+|\theta_0|^2V\big)&=-\big(d|\theta_0|^2\big)^\sharp+abJ\theta_0^\sharp,\label{2}\\
    \mathcal{L}_{JV}\big(b\theta_0^\sharp-|\theta_0|^2JV\big)&=\big(d|\theta_0|^2\big)^\sharp+abJ\theta_0^\sharp,\label{3}\\
    \mathcal{L}_{JV}\big(-a\theta_0^\sharp+|\theta_0|^2V\big)&=-\big(|\theta_0|^2+a^2\big)J\theta_0^\sharp.\label{4}
\end{align}
We will use the following lemma.
\begin{lem}\label{Killing vector fields}
    Let $X$ and $Y$ be nontrivial Killing vector fields on a connected Riemannian manifold. If
$$X^\flat\wedge Y^\flat=0,$$
then there exists a constant $c\in\mathbb{R}$ such that $X=cY$.
\end{lem}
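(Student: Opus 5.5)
The plan is to work on the open set where $Y$ does not vanish and define a function $f$ there, then show that $f$ is locally constant and extends across the zero set of $Y$. Since $Y$ is nontrivial and the manifold is connected, the set $W:=\{Y\neq 0\}$ is open and dense by \cite{Kob58}. On $W$ the condition $X^\flat\wedge Y^\flat=0$ says that $X^\flat$ is pointwise proportional to $Y^\flat$. Hence there is a unique function
$$f:=\frac{g(X,Y)}{|Y|^2}\in C^\infty(W)$$
with $X=fY$ on $W$.

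Next we show that $f$ is locally constant on $W$, using that both $X$ and $fY$ are Killing there. Since $Y$ is Killing, $\mathcal{L}_{fY}g=f\mathcal{L}_Yg+df\cdot Y^\flat=df\cdot Y^\flat$. Because $X=fY$ is Killing on $W$, this forces $df\cdot Y^\flat=0$ on $W$. Evaluating this symmetric tensor on $(Y,Z)$ gives
$$df(Y)g(Y,Z)+df(Z)|Y|^2=0.$$
Taking $Z=Y$ first yields $2df(Y)|Y|^2=0$, so $df(Y)=0$. Substituting back gives $df(Z)|Y|^2=0$ for all $Z$, hence $df=0$ on $W$. Therefore $f$ is constant on each connected component of $W$.

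The main obstacle is that $W$ may be disconnected, so a priori $f$ could take different constant values on different components. Here I would use the standard fact that a Killing field is determined by its $1$-jet at any single point, that is, by its value and covariant derivative there. Fix any component $W_1$ of $W$ with constant value $c$. The field $X-cY$ is Killing and vanishes on the open nonempty set $W_1$, so it vanishes identically on the connected manifold. This is the unique continuation property of Killing fields, which can equally be deduced from the density result applied to $X-cY$: if $X-cY$ were nontrivial, its non-vanishing set would be dense, which is impossible since it vanishes on an open set. Hence $X=cY$ everywhere. Since $X$ is nontrivial, $c\neq 0$, although this is not even needed for the conclusion.
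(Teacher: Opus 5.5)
Your proof is correct and follows the paper's argument: you define $f=g(X,Y)/|Y|^2$ on the dense open set $\{Y\neq 0\}$ and evaluate $\mathcal{L}_{fY}g=df\cdot Y^\flat=0$ on $(Y,Y)$ and $(Y,Z)$ to get $df=0$ there. Your last step, applying the density of the non-vanishing set to the Killing field $X-cY$, makes explicit what the paper passes over when it goes from $df=0$ on $\{Y\neq 0\}$ to $f$ constant (that set is in fact connected, because the zero set of a nontrivial Killing field has codimension at least two, but your argument does not need this).
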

\begin{proof}
    Since $X^\flat\wedge Y^\flat=0$, there exists a smooth function $f$ defined on the dense open set $M_0:=\{Y\ne 0\}$ such that $X^\flat=fY^\flat$. Since both $X$ and $Y$ are Killing, we have
\begin{align*}
0&=\mathcal{L}_{fY}g\\
&=f\mathcal{L}_Yg + df\otimes Y^\flat + Y^\flat\otimes df\\
&= df\otimes Y^\flat + Y^\flat\otimes df.
\end{align*}
Let $W$ be orthogonal to $Y$. Evaluating the above identity on $(Y,Y)$ and $(W,Y)$ we obtain
\begin{align*}
    2df(Y)|Y|^2=0,
\end{align*}
and 
\begin{align*}
    df(W)|Y|^2=0.
\end{align*}
Since $M_0$ is dense, we conclude that 
$$df(Y)=df(W)=0.$$ 
Hence $df=0$ on $M_0$, and therefore $f$ is constant on $M$.
\end{proof}
Applying the above lemma with $Y=J\theta_0^\sharp$ and $X$ given by \eqref{1}, \eqref{4} and $\eqref{2}+\eqref{3}$, we deduce that the functions
$$|\theta_0|^2+b^2,\qquad
|\theta_0|^2+a^2,\qquad
ab$$
are constant. Indeed, if $c$ and $e$ are constants such that
\[\begin{cases}
ab=c\\
a^2-b^2=e,
\end{cases}\]
then 
\begin{align*}
    c^2&=(ab)^2=(b^2+e)b^2,
\end{align*}
that is,
\begin{align*}
    0&=b^4+b^2e-c^2.
\end{align*}
Hence, $b$ is constant, and consequently so $a$ and $|\theta_0|$. In particular, $|\theta|$ is constant.

We now prove that this case cannot occur in complex dimension greater than two. Assume $n>2$. By Lemma \ref{Lemmaforf}, there exists a smooth function
\begin{equation}
    f=a^2+b^2
\end{equation}
such that
\begin{equation}\label{fg}
\begin{split}
fV^\flat&=-bJ\theta+a\theta\\
&=(a^2+b^2)V^\flat+a\theta_0-bJ\theta_0.
\end{split}
\end{equation}
Hence,
$$a\theta_0-bJ\theta_0=0,$$
which implies that $a=b=0$, since $a$ and $b$ are constant and, by assumption, $\theta_0\not\equiv 0$.
From \eqref{ddt3} and \eqref{ddt4} we obtain
\begin{align*}
    0&=dJ\theta\wedge V^\flat=dJ\theta\wedge JV^\flat,
\end{align*}
and therefore, using \eqref{dJthetatcase}
\begin{align}\label{dJthtetaequalVJV}
    dJ\theta=-|\theta|^2 V^\flat\wedge JV^\flat.
\end{align}
Let $\{e_j\}_{j=1}^{2n}$ be a local orthonormal frame with $e_1=V$ and $e_2=JV$. 
Then identity \eqref{again} reduces to
\begin{align*}
    -\delta\theta+(1-n)|\theta|^2=-|\theta|^2.
\end{align*}
Integrating over $M$, we obtain
$$(n-2)\int |\theta|^2=0.$$
Since $n>2$, it follows that $\theta\equiv0$, a contradiction.\\

In complex dimension two, we argue differently. From \eqref{A} and \eqref{B}, we deduce that both $a\theta_0^\sharp$ and $b\theta_0^\sharp$ are Killing vector fields. If at least one of $a$ and $b$ is nonzero, then $\theta_0^\sharp$ is Killing, and therefore $\theta^\sharp$ is Killing as well. Since $\theta$ is also closed, it follows that it is Levi--Civita parallel. Therefore, the lcK metric $\omega$ is Vaisman. Assume now that $a=b=0$. Then, $J\theta^\sharp$ is Killing and, since the metric is lcK,
\[
\mathcal{L}_{J\theta^\sharp}\omega=0.
\]
These two properties imply that $J\theta^\sharp$ is holomorphic, and consequently so is $\theta^\sharp$. Since $\theta$ has constant length, \cite[Theorem 1]{MMO18} implies that $(M,J,\omega)$ is Vaisman.\\

\textbf{Case $\theta_0\equiv 0$.} Since $t=-1$, Lemma \ref{theta0zeroonopenset} immediately implies that $a$ and $b$ are constant. Since $V$ and $JV$ are $\nabla^b$-parallel, it follows that
$\nabla^b\theta=0$. On an lcK manifold, this condition is equivalent to $\nabla^g\theta=0$ (Theorem \ref{BismutLCparallelAV}). Therefore, $(M,J,\omega)$ is Vaisman.

\begin{rem} 
    We note that, in complex dimension $n>2$, the vector field $V$ is a linear combination of the Lee vector field $\theta^\sharp$ and the anti-Lee vector field $J\theta^\sharp$ with constant coefficients.
In complex dimension two, both cases appearing in the proof may occur. The first corresponds to $V$ being a linear combination of $\theta^\sharp$ and $J\theta^\sharp$ with constant coefficients, while the metric is not necessarily Bismut--Ricci flat; this is the case of Vaisman solvmanifolds \cite[Corollary~4.9]{AV22}.
The second case yields Bismut--Ricci flat surfaces. Indeed, up to a  normalization, there exists a $J$-orthonormal parallel Bismut basis $\{\theta^\sharp, J\theta^\sharp, V, JV\}$ and in complex dimension two the identity
\begin{align*}
    \rho^b(X,Y)&=\text{Ric}^b (X,JY)
\end{align*} 
holds for any vector fields $X,Y$ \cite[Proposition~3.1]{IP01}. This is the case of Hopf Vaisman surfaces \cite[Corollary 5.11]{AV22}.
\end{rem}

As an immediate consequence of Theorem \ref{maintheorem}, we obtain Theorem \ref{thm:holb-U(n-1)-vaisman}.
\begin{proof}[Proof of Theorem \ref{thm:holb-U(n-1)-vaisman}]
    The implication \emph{(i)$\Rightarrow$(ii)} is standard. On a Vaisman manifold the Lee form
is $\nabla^b$-parallel (Theorem \ref{BismutLCparallelAV}), and therefore so are $\theta^\sharp$ and $J\theta^\sharp$; consequently,
the Bismut holonomy reduces to $U(n-1)$. 
Conversely, assume that $\text{Hol}(\nabla^b)\subset U(n-1)$. Since we are considering the full
holonomy group, there exists a global nowhere-vanishing $\nabla^b$-parallel vector
field $V$. Theorem \ref{maintheorem} implies that $(M,J,\omega,\theta)$ is Vaisman.
\end{proof}
 
We conclude this section with a remark on the Bismut connection in the locally conformally balanced setting. We show that the analogue of Theorem~\ref{thm:holb-U(n-1)-vaisman} fails in this more general framework. This suggests that the lcK setting is the largest natural non-K\"ahler class in which the existence of a nontrivial Bismut-parallel vector field forces the Lee vector field to be parallel.
\begin{rem}
    Let $(X,J)$ be a complex manifold of complex dimension $n$ and let $\omega$ be a Hermitian metric on $X$. The metric $\omega$ is called \emph{balanced} if $d\omega^{n-1}=0$. In complex dimension two, any balanced metric is K\"ahler. The metric $\omega$ is called \emph{locally conformally balanced} (lcb) if
    $$d\omega^{n-1}=(n-1)\theta\wedge \omega^{n-1},\qquad d\theta=0,$$
    where $\theta$ denotes the Lee form of $\omega$. 
    By definition, in complex dimension two the notions of lcb and lcK coincide. On the other hand, in complex dimension greater than two, any lcK metric is lcb, whereas the converse does not hold.
    
    We observe that Theorem \ref{BismutLCparallelAV} remains valid in the lcb setting, provided that the Lee vector field is holomorphic and the anti-Lee vector field is Killing. We emphasize that, even in the lcK case, these additional assumptions on the Lee vector field do not imply that it is Levi--Civita parallel; see \cite[Lemma 2 and Section 3]{MMO18}.
    Let $(M,g,\omega,\theta)$ be a compact lcb manifold whose Lee vector field is holomorphic and such that $J\theta^\sharp$ is Killing. In this case, a straightforward computation shows that the Lie derivative of $\omega$ along $J\theta^\sharp$ vanishes. Therefore, by Cartan's formula, we obtain
$$\iota_{J\theta^\sharp}d\omega=\mathcal{L}_{J\theta^\sharp}\omega-d\iota_{J\theta^\sharp}\omega=0.$$
Following the same argument as in \cite[Corollary 3.2 and Theorem 3.6]{AV22}, it follows that $\theta$ is Bismut parallel if and only if it is Levi--Civita parallel. 

We now show that, in the more general lcb setting, the existence of a nontrivial Bismut-parallel vector field does not imply that the Lee form $\theta$ is Levi--Civita parallel, even under the additional assumptions above. Consider, for example, the compact Hermitian manifold 
$$(X,g,\omega):=(T\times M,g_T+g_0,\omega_T+\omega_0)$$
where $(T,g_T,\omega_T)$ is a flat complex torus and $(M,g_0,\omega_0,\theta_0)$ is a compact lcK manifold of complex dimension $n-1\ge 2$. By construction, $(X,\omega,\theta)$, where 
$$\theta=\frac{n-2}{n-1}\theta_0,$$
is an lcb manifold. Indeed,
\begin{align*}
    d\omega^{n-1}&=d\big(\omega_T+\omega_0\big)^{n-1}\\
    &=d\big((n-1)\omega_T\wedge\omega_0^{n-2}\big)\\
    &=(n-2)\theta_0\wedge \big((n-1)\omega_T\wedge \omega_0^{n-2}\big)\\
    &=(n-2)\theta_0\wedge \omega^{n-1}\\
    &=(n-1)\bigg(\frac{n-2}{n-1}\theta_0\bigg)\wedge \omega^{n-1}.
\end{align*}
    However, $(X,\omega)$ is not lcK, since
    $$d\omega=d\omega_0=\theta_0\wedge\omega_0\ne\theta\wedge\omega.$$
    Let $(s,t)$ be isothermal coordinates on $(T,g_T,\omega_T)$ such that $g_T=c(ds^2+dt^2)$. Then the vector field $\partial_t$ is Bismut parallel with respect to the Bismut connection $\nabla^b$ of $g$, while the Levi--Civita covariant derivative of the Lee form $\theta$ satisfies
    $$\nabla^g\theta=\frac{n-2}{n-1}\nabla^{g_0}\theta_0.$$
Therefore, if $(M,g_0,\omega_0,\theta_0)$ is not Vaisman, then
$$\nabla^g\theta\ne 0.$$ 
\end{rem}

\section{The holonomy of the Chern connection}
We now consider the case $t=1$, corresponding to the Chern connection. In this situation, equations
\eqref{ddt3}, \eqref{ddt4}, and \eqref{dJthetatcase} yield
\begin{align*}
    dJ\theta=\big(V(a)+JV(b)\big)V^\flat\wedge JV^\flat.
\end{align*}
Using \eqref{codifft}, we compute
\begin{equation}\label{VaJVbaabb}
    \begin{split}
        V(a)+JV(b)&=-\delta\big(aV^\flat\big)+a\delta V^\flat-\delta\big(bJV^\flat\big)+b\delta JV^\flat\\
        &=-\delta\big(aV^\flat\big)-\delta\big(bJV^\flat\big)+\big(a^2+b^2\big)(1-n).
    \end{split}
\end{equation}
Combining this with \eqref{again}, we obtain
\begin{align*}
    -\delta\big(aV^\flat\big)-\delta\big(bJV^\flat\big)+\big(a^2+b^2\big)(1-n)&=dJ\theta(V,JV)\\
    &=\frac{1}{2}\sum_{i=1}^{2n}dJ\theta(e_i,Je_i)\\
    &=-\delta \theta+(1-n)|\theta|^2.
\end{align*}
Integrating over $M$, we conclude that $\theta_0\equiv 0$. Hence,
\begin{align*}
    \theta&=aV^\flat+bJV^\flat.
\end{align*}
By \eqref{ddt1} and \eqref{ddt2}, we have $dV^\flat=dJV^\flat=0$. Since $d\theta=0$, it follows that
\begin{align}\label{a,b depend on s,t}
    da\wedge V^\flat+db\wedge JV^\flat=0.
\end{align}
Since $t=1$ and $\theta_0\equiv 0$, equation \eqref{tLiederivative}
reduces to
\begin{equation}\label{Liederivativet=1}
\begin{split}
    \mathcal{L}_Vg=a\big(g-V^\flat\otimes V^\flat-JV^\flat\otimes JV^\flat\big),\\
    \mathcal{L}_{JV}g=b\big(g-V^\flat\otimes V^\flat-JV^\flat\otimes JV^\flat\big).
    \end{split}
\end{equation}

We now pass to the universal cover $\tilde{M}$ of $M$, and keep the same notation for the lifted objects. Since $\theta$ is closed, there exists a smooth function $\varphi$ on $\tilde{M}$ such that $\theta=d\varphi$. We decompose the metric $g$ as
\begin{align*}
    g=V^\flat\otimes V^\flat+JV^\flat\otimes JV^\flat+g_0.
\end{align*}
Since $V^\flat$ and $JV^\flat$ are closed, we have 
\begin{equation}\label{liederivativeszero}
    \mathcal{L}_V \big(V^\flat\otimes V^\flat\big)=\mathcal{L}_V \big(JV^\flat\otimes JV^\flat\big)=\mathcal{L}_{JV} \big(V^\flat\otimes V^\flat\big)=\mathcal{L}_{JV} \big(JV^\flat\otimes JV^\flat\big)=0.
\end{equation}
Therefore, \eqref{Liederivativet=1} yields
\begin{equation}\label{Liederivativegzero}
    \begin{split}
        \mathcal{L}_Vg_0&=\mathcal{L}_V g=ag_0,\\
        \mathcal{L}_{JV}g_0&=\mathcal{L}_{JV}g=bg_0.
    \end{split}
\end{equation}
Consequently, since $V(\varphi)=\theta(V)=a$ and $JV(\varphi)=\theta(JV)=b$,
\begin{equation}\label{otherLiederivativeszero}
   \mathcal{L}_V\big(e^{-\varphi}g_0\big)=\mathcal{L}_{JV}\big(e^{-\varphi}g_0\big)=0.
\end{equation}
Consider the metric 
\begin{align*}
    g_1=V^\flat\otimes V^\flat+JV^\flat\otimes JV^\flat+e^{-\varphi}g_0.
\end{align*}
Observe that the metric duals of $V$ and $JV$ are the same with respect to both $g$ and $g_1$.
By \eqref{liederivativeszero} and \eqref{otherLiederivativeszero}, $V$ and $JV$ are Killing vector fields with respect to $g_1$. Moreover, since $V^\flat$ and $JV^\flat$ are closed, $V$ and $JV$ are parallel with respect to the Levi--Civita connection $\nabla^{g_1}$ of $g_1$.
\begin{lem}
    The metric $g_1$ is complete.
\end{lem}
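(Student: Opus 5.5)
The plan is to compare $g_1$ with the lifted metric $g$, which is complete on $\tilde M$ because it is the pull-back of a metric on the compact manifold $M$. The conformal factor $e^{-\varphi}$ on the $g_0$-part may be unbounded on $\tilde M$, so there is no global comparison between $g$ and $g_1$. I will instead show that $\varphi$ varies by a controlled amount along curves of bounded $g_1$-length. This gives a local-in-radius comparison, which suffices for the Hopf--Rinow theorem.

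First, I record that the functions $a=\theta(V)$ and $b=\theta(JV)$ are lifts of smooth functions on the compact manifold $M$. Hence there is a constant $K$ with $|a|,|b|\le K$ on $\tilde M$. Next, since $\theta_0\equiv 0$, we have $d\varphi=\theta=aV^\flat+bJV^\flat$. As observed above, the metric duals of $V$ and $JV$ are the same for $g$ and $g_1$, and $V,JV$ are $g_1$-orthonormal. Therefore, for any tangent vector $X$,
\[
|d\varphi(X)|\le |a|\,|g_1(V,X)|+|b|\,|g_1(JV,X)|\le 2K|X|_{g_1}.
\]
Consequently, along any piecewise smooth curve $\gamma$ starting at a point $p$ with $g_1$-length $L_{g_1}(\gamma)\le R$, we get $|\varphi(\gamma(\tau))-\varphi(p)|\le 2KR$.

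Then I compare lengths along such a curve. Since $g=V^\flat\otimes V^\flat+JV^\flat\otimes JV^\flat+e^{\varphi}\,(e^{-\varphi}g_0)$, we have the pointwise bound
\[
g\le \max\big(1,e^{\varphi}\big)\,g_1 .
\]
Along $\gamma$ this gives
\[
L_g(\gamma)\le \max\big(1,e^{(\varphi(p)+2KR)/2}\big)\,R=:C(p,R).
\]
Hence the closed $g_1$-ball $\bar B_{g_1}(p,R)$ is contained in the closed $g$-ball $\bar B_g(p,C(p,R))$. To see this, take a point at $g_1$-distance $\le R$, approximate it by curves of $g_1$-length $\le R+\varepsilon$, and let $\varepsilon\to 0$.

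Finally, by completeness of $g$ and the Hopf--Rinow theorem, $\bar B_g(p,C)$ is compact. Both metrics induce the manifold topology, so $\bar B_{g_1}(p,R)$ is a closed subset of a compact set and is therefore compact. Since this holds for all $R$, the metric space $(\tilde M,d_{g_1})$ has compact closed bounded sets, and Hopf--Rinow gives completeness of $g_1$.

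The main obstacle is the unboundedness of $\varphi$ on $\tilde M$, which rules out a direct bi-Lipschitz comparison between $g$ and $g_1$. The key point that circumvents it is that $d\varphi$ only involves the $V^\flat$ and $JV^\flat$ directions, whose $g_1$-lengths are unaffected by the conformal rescaling. This is exactly where $\theta_0\equiv 0$ and the boundedness of $a$ and $b$ enter.
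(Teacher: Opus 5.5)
Your proof is correct and follows essentially the same route as the paper. Both arguments use $d\varphi=aV^\flat+bJV^\flat$ with $a,b$ bounded to control $\varphi$ along curves of bounded $g_1$-length, and then bound $g_1$ from below by a complete metric pulled back from $M$. The paper uses $g_k=V^\flat\otimes V^\flat+JV^\flat\otimes JV^\flat+e^{-k}g_0$ along a putative maximal geodesic of finite length and argues by contradiction, whereas you use a constant multiple of $g$ and show directly that closed $g_1$-balls are compact, which is a slightly cleaner packaging of the same Hopf--Rinow argument.
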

\begin{proof}
    Suppose by contradiction that $(\tilde{M},g_1)$ is not complete. Then there exists a unit-speed maximal geodesic
    $$\gamma : [0,T) \to \tilde{M},$$ 
    with $T \in (0,+\infty)$ and $\gamma(0)=P$. Since $\gamma$ is maximal, for any compact subset $K\subset \tilde{M}$ one has $\gamma\big([0,T)\big)\not\subset K$.
Hence, there exists an exhaustion $(K_n)_n$ of $\tilde{M}$ by compact subsets and a sequence $(t_n)_n\subset[0,T)$ such that $\gamma(t_n)\notin K_n$ and
\begin{align}\label{notinfinity}
    d_{ g_1}\big(P,\gamma(t_n)\big)\not\to +\infty.
\end{align}
Since $a,b$ are defined on the compact manifold $M$, they are bounded on $\tilde{M}$. Thus, $|a|,|b| < \alpha$ for some constant $\alpha$.
We now show that, the function $\varphi$ is bounded along $\gamma$. Indeed,
\begin{align*}
    \bigg|\frac{d}{dt}\varphi\big(\gamma(t)\big)\bigg|&=\big|d\varphi\big(\dot\gamma\big)\big|\\
    &=\big|a\langle V,\dot{\gamma}\rangle_{g_1}+b\langle JV,\dot{\gamma}\rangle_{g_1}\big|\le2\alpha .
\end{align*}
In particular, there exists a constant
$$k:=|\varphi(P)|+2\alpha T $$
such that for any  $t\in[0,T)$,
$$\big|\varphi\big(\gamma(t)\big)\big|\le k.$$
Define the metric
$$g_k:=V^\flat\otimes V^\flat+JV^\flat\otimes JV^\flat+e^{-k}g_0.$$
Then $g_k$ is complete, since it is the pullback of a metric on $M$. Moreover, along $\gamma$ we have
$$g_k\le g_1,$$
which, by the Hopf--Rinow theorem \cite[Theorem 2.8]{dC92}, contradicts (\ref{notinfinity}).
\end{proof}
Applying the De Rham theorem \cite{KN96}, we deduce that the universal cover $\tilde{M}$, endowed with the metric $\tilde{g}_1$, splits isometrically as
\begin{align*}
    \big(\tilde{M},\tilde{g_1}\big)=\big(\mathbb{R}^2\times N,ds^2+dt^2+g_N\big)
\end{align*}
where $(N,g_N,\omega_N)$ is a complete simply connected manifold of complex dimension $n-1$, and $V=\partial_s$, $JV=\partial_t$. We claim that $(N,g_N,\omega_N)$ is K\"ahler. Indeed,
\begin{align*}
    d\omega_N&=d\big(e^{-\varphi}\omega_0\big)\\
    &=-e^{-\varphi}\theta\wedge\omega_0+e^{-\varphi}d\omega_0\\
    &=-e^{-\varphi}\theta\wedge\omega+e^{-\varphi}d\omega=0.
\end{align*}
By \eqref{a,b depend on s,t} and the fact that $\theta=aV^\flat+bJV^\flat$, the function $\varphi$ defined by $d\varphi=\theta$ depends only on $s$ and $t$.
Furthermore, by construction, we obtain the following isometry
\begin{align*}
    \big(\tilde{M},\tilde{g}\big)=\big(\mathbb{R}^2\times N,ds^2+dt^2+e^{\varphi}g_N\big).
\end{align*}
We now show that $\varphi$ descends to $M$ using the following result from \cite[Lemma 3.4]{Mor15}.
\begin{lem}
Assume that  $(N^d,g_N)$ is a complete simply connected Riemannian manifold of dimension $d\ge 1$, $c:\mathbb{R}^2\to\mathbb{R}$ is a smooth function and $\Gamma$ is a co-compact group acting totally discontinuously by isometries on the Riemannian manifold $(\mathbb{R}^2\times N,\
d s^2+d t^2+e^{c(s,t)}g_N)$. Assume moreover that $\Gamma$ preserves the vector fields $\partial_s$ and $\partial_t$. Then the function $c$ is invariant by $\Gamma$.
\end{lem}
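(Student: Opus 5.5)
We need to prove that for $\gamma\in\Gamma$ we have $c\circ\gamma=c$. The plan is to understand how an isometry $\gamma$ preserving $\partial_s,\partial_t$ acts on $\mathbb{R}^2\times N$, show that it changes $c$ by an additive constant, and then use co-compactness to force that constant to vanish.

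\emph{Step 1: the structure of $\gamma$.} Since $\gamma$ is an isometry preserving $\partial_s$ and $\partial_t$, it preserves their metric duals $ds$ and $dt$. Hence $\gamma^*s=s+s_0$ and $\gamma^*t=t+t_0$ for constants $s_0,t_0$, so $\gamma$ acts on the $\mathbb{R}^2$ factor by a translation $\tau$. It also preserves the orthogonal distribution $T N$, which is integrable with leaves $\{(s,t)\}\times N$. So $\gamma(s,t,x)=(s+s_0,t+t_0,\psi_{s,t}(x))$. Because $\gamma$ commutes with the flows of $\partial_s,\partial_t$ (it preserves these fields), $\psi_{s,t}=\psi$ does not depend on $(s,t)$.

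\emph{Step 2: the conformal factor.} Pulling back the metric gives
\[
e^{c(\tau(s,t))}\psi^*g_N=e^{c(s,t)}g_N .
\]
Thus $\psi^*g_N=e^{c(s,t)-c(\tau(s,t))}g_N$. The left side is independent of $(s,t)$, so $\lambda:=c\circ\tau-c$ is a constant. Then $\psi$ is a homothety of the complete manifold $(N,g_N)$ with ratio $e^{-\lambda}$.

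\emph{Step 3: ruling out $\lambda\neq0$.} This is the main obstacle. The first idea is that if $\lambda\ne0$, then $c$ grows linearly along the orbit $\tau^k(p)$, since $c(\tau^k p)=c(p)+k\lambda$. I will use co-compactness through a $\Gamma$-invariant quantity that is bounded on a compact fundamental domain. The natural candidate is the Riemannian volume form $e^{d\,c/2}\,ds\,dt\,\mathrm{vol}_{g_N}$. It is $\Gamma$-invariant, but not pointwise bounded in a useful way, so I will instead use the function $F:=|\nabla c|^2$, or the second fundamental form of the leaves. Under the isometry $\gamma$, the leaf $\{(s,t)\}\times N$ is mapped to the leaf $\{\tau(s,t)\}\times N$. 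The mean curvature vector of the leaves is $-\tfrac d2(\partial_s c\,\partial_s+\partial_t c\,\partial_t)$, so $\gamma$ preserves it. Hence $dc$ is $\Gamma$-invariant, which is consistent with $c\circ\tau-c$ being constant but does not yet force $\lambda=0$.

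To conclude, I will use the injectivity radius, or equivalently the volumes of balls. On the compact quotient, the volume of unit balls is bounded below by some $v>0$ and above by some $V$, uniformly on $\mathbb{R}^2\times N$, because these quantities are $\Gamma$-invariant. Now apply $\gamma^k$ with $\lambda\ne0$ and fix a point $p=(s,t,x)$. The leaf through $\gamma^k(p)$ is $N$ with metric $e^{c(p)+k\lambda}g_N$. Choose the sign of $k$ so that $k\lambda\to-\infty$. Then the metric on the $N$-direction shrinks, while the metric on the $\mathbb{R}^2$-direction is unchanged. Since $dc$ is invariant, the gradient of $c$ is bounded on the whole space, and in particular $|dc|$ is bounded near $\tau^k(s,t)$.

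I expect this to give a contradiction through local geometry. Near $\gamma^k(p)$ the metric is $C^1$-close, on a fixed-size neighborhood, to $ds^2+dt^2+\varepsilon_k g_N$ with $\varepsilon_k\to0$. One would hope to compare this with the neighborhood of $p$ via the isometry $\gamma^k$ and extract $\lambda=0$. But $\gamma^k$ is an isometry, so the two neighborhoods are genuinely isometric, and this comparison alone gives no contradiction. The missing ingredient is that $\psi^k$ is a homothety of $N$ with ratio $e^{-k\lambda}\to\infty$.

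I will use the following fact: a complete Riemannian manifold admitting a non-isometric homothety is isometric to Euclidean space, since the homothety (or its inverse) is a contraction with a fixed point. If $N$ is not flat, this gives $\lambda=0$ directly. If $(N,g_N)=\mathbb{R}^d$, I expect to argue using properness of the action. By co-compactness, the sequence $\gamma^k$ moves a compact set $K$, and the translates $\gamma^k K$ cover the whole space. Tracking the fixed point of $\psi$ shows that the leaf volumes of $\gamma^k K$ scale like $e^{kd\lambda/2}$. At the same time, by invariance, the total volumes of $\gamma^k K$ are equal. Reconciling these two facts is the part I expect to need the most care.
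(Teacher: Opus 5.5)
Your Steps 1 and 2 are correct and give the standard reduction. (The paper does not reprove this lemma; it quotes [Mor15, Lemma 3.4] and says the argument carries over.) Every $\gamma\in\Gamma$ has the form $\gamma(p,x)=(p+\rho_\gamma,\psi_\gamma(x))$, where $\lambda_\gamma:=c(p+\rho_\gamma)-c(p)$ is constant and $\psi_\gamma^*g_N=e^{-\lambda_\gamma}g_N$.

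The gap is Step 3, which is the actual content of the lemma. After reducing to $N\cong\mathbb{R}^d$ you only announce a volume comparison and leave its conclusion open. That comparison cannot give a contradiction. The sets you compare are images of each other under the isometries $\gamma^k$, so their Riemannian volumes agree trivially. The factor $e^{kd\lambda/2}$ in the $g_N$-volume of the leaves is exactly compensated by the warping factor $e^{dc/2}$. Also, the translates $\gamma^kK$ of a compact set under powers of a single element do not cover $\mathbb{R}^2\times N$; only $\Gamma K$ does.

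In fact no argument using only $\langle\gamma\rangle$ can succeed. Take $N=\mathbb{R}^d$, $c(s,t)=s$ and $\gamma(s,t,x)=(s+1,t,e^{-1/2}x)$. Then $\gamma$ is an isometry preserving $\partial_s,\partial_t$, $\langle\gamma\rangle$ acts freely and properly discontinuously, and $c\circ\gamma=c+1$. What rules this out is co-compactness of the whole group $\Gamma$, and your sketch never says how the other elements of $\Gamma$ enter.

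One way to close the gap, which also makes your flatness reduction unnecessary, goes as follows.
\begin{enumerate}
\item[(a)] Assume $\lambda_\gamma>0$ (otherwise replace $\gamma$ by $\gamma^{-1}$). Then $\psi_\gamma$ is a strict contraction of the complete space $N$, so it has a unique fixed point $x_0$. Moreover $N$ is unbounded, since $\psi_\gamma^{-1}$ is a bijection multiplying distances by $e^{\lambda_\gamma/2}>1$.
\item[(b)] Fix $\eta\in\Gamma$ and set $q:=(0,x_0)$. Then $\gamma^k\eta\gamma^{-k}q=(\rho_\eta,\psi_\gamma^k\psi_\eta x_0)\to(\rho_\eta,x_0)$ as $k\to\infty$. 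By proper discontinuity there are $k_1<k_2$ with $\gamma^{k_1}\eta\gamma^{-k_1}=\gamma^{k_2}\eta\gamma^{-k_2}$.
\item[(c)] Hence $\psi_\eta$ commutes with the contraction $\psi_\gamma^{k_2-k_1}$, so it must fix that map's unique fixed point: $\psi_\eta x_0=x_0$. Thus $\Gamma q\subset\mathbb{R}^2\times\{x_0\}$.
\item[(d)] Co-compactness gives $R>0$ such that every point lies within distance $R$ of $\Gamma q$. But a curve of length at most $R$ starting at $(0,x)$ projects into the disc of radius $R$ in $\mathbb{R}^2$, where $c\ge m$ for some constant $m$. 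If the curve ends on $\mathbb{R}^2\times\{x_0\}$, its length is therefore at least $e^{m/2}d_N(x,x_0)$, which is unbounded in $x$.
\end{enumerate}
This contradiction gives $\lambda_\gamma=0$ for all $\gamma\in\Gamma$.
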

We note that in \cite[Lemma 3.4]{Mor15} the function $c$ is assumed to depend only on $t$. However, the argument applies verbatim to the case where $c$ depends on both $s$ and $t$.\\
In particular, $\varphi$ is invariant under the deck transformation group of $g$. Hence $\varphi$ descends to $M$, and consequently $\theta$ is exact on $M$. Therefore, $\omega$ is globally conformally K\"ahler.\\

We conclude this section by providing an example of a compact lcK manifold admitting a nontrivial Chern-parallel vector field and a nonvanishing exact Lee form.

Let $T^2$ be a complex torus endowed with a Hermitian metric $m:=f(x,y)(dx^2+dy^2)$, written in isothermal coordinates, and let $(K,g_K)$ be a compact K\"ahler manifold.
Then $X:=T^2\times K$ is K\"ahler with respect to the product metric 
\begin{align*}
    g&:=f(x,y)\big(dx^2+dy^2\big)+g_K.
\end{align*}
By construction, the conformally related metric $h:=f^{-1}g$ is gcK with Lee form $\theta=d\big(-\text{log}f\big)=-\frac{df}{f}$. Let $W\in TK\subset TX$. Using \cite[Theorem 1.159]{Bes08} and \eqref{nablatt}, we obtain
\begin{align*}
    0&=\nabla^g_W\partial_x\\
    &=\nabla^h_W\partial_x-\frac{1}{2}\big(\theta(W)\partial_x+\theta(\partial_x)W-h(\partial_x,W)\theta^\sharp\big)\\
    &=\nabla^h_W\partial_x-\frac{1}{2}\theta(\partial_x)W\\
    &=\nabla^{1,h}_W\partial_x+\frac{1}{2}\theta(\partial_x)W-\frac{1}{2}\theta(\partial_x)W=\nabla^{1,h}_W\partial_x.
\end{align*}
Moreover, since $m$ is conformally flat we compute
\begin{align*}
\nabla^{1,h}_{\partial_x}\partial_x&=\nabla^h_{\partial_x}\partial_x\\
&=\nabla^g_{\partial_x}\partial_x+\frac{1}{2}\big(\theta(\partial_x)\partial_x+\theta(J\partial_x)J\partial_x-\theta^\sharp\big)\\
&=\nabla^m_{\partial_x}\partial_x+\frac{1}{2}\big(\theta(\partial_x)\partial_x+\theta(J\partial_x)J\partial_x-\theta^\sharp\big)\\
&=\nabla^{flat}_{\partial_x}\partial_x=0,
\end{align*}
and similarly
\begin{align*}
\nabla^{1,h}_{J\partial_x}\partial_x=\nabla^{1,h}_{\partial_x}J\partial_x&=\nabla^{1,h}_{J\partial_x}J\partial_x=0.
\end{align*}
We conclude that $\partial_x,\partial_y=J\partial_x$ are parallel with respect to the Chern connection $\nabla^{1,h}$ of $h$. If $f$ is nonconstant, then $\theta\not\equiv 0$, and $h$ is a non-K\"ahler gcK metric.

\section{\texorpdfstring{The exceptional case $t=\frac{1}{3}$}{t=1/3}}

In this section, we analyze the remaining case $t=\frac{1}{3}$.
We first prove, without any dimension restriction, that $a=b=0$, which means $V$ and $JV$ are orthogonal to the Lee vector field $\theta^\sharp$. After computing some further relations satisfied by the vector fields $V,\ JV$ and the $1-$forms $\theta$ and $J\theta$, we then specialize our arguments to the complex dimension two case. Direct computations, which will be used later in the analysis of the universal cover, allow us to compute the Lie derivative of the metric $g$ along the anti-Lee vector field $J\theta^\sharp$ as well as the brackets involving the vector fields $V,\ JV,\ \theta^\sharp$, and $J\theta^\sharp$.

On the universal cover $\tilde M$, we write the Lee form $\theta$ as $\theta=d\varphi$ and identify $\tilde M$ with $S\times \mathbb R^2$, where $S$ is a connected complete surface, using the flows of suitable commuting vector fields. We then note that the vector field $e^{\frac23 \varphi}J\theta^\sharp$ is Killing on $\tilde M$ and determines a Killing vector field on $S$ whose zero set is discrete and non-empty. Therefore, $S$ is a surface of revolution, diffeomorphic either to $\mathbb S^2$ or to $\mathbb R^2$. Both cases imply that $\varphi$ is invariant under the action of the deck transformation group of $\tilde M$. Hence, the metric is gcK. \\

We start by proving our claim that $a=b=0$.
Wedging a $1$-form with $\omega$ is injective. Therefore, from (\ref{ddt4}) we obtain
\begin{align}\label{da in the 1over3 case}
    da=-\frac{2}{3}a\theta, \qquad db=-\frac{2}{3}b\theta.
\end{align}
In particular,
$$V(a)=-\frac{2}{3}a^2,\qquad JV(b)=-\frac{2}{3}b^2.$$ 
Using (\ref{codifft}), we compute
\begin{align*}
    \delta\big(aV^\flat)&=-V(a)+a\delta V^\flat\\
    &=\frac{2}{3}a^2+\frac{2}{3}(1-n)a^2=\frac{2}{3}(2-n)a^2,
    \end{align*}
    and similarly,
    \begin{align*}
    \delta(bJV^\flat)&=\frac{2}{3}(2-n)b^2.
\end{align*}
If $n>2$, integrating the above identities over $M$, we conclude directly that $a=b=0$. On the other hand, if $n=2$, then
$$\delta(aV^\flat)=\delta(bJV^\flat)=0,$$
which implies
\begin{align}\label{deltathetadeltathetazero case 1over3}
    \delta\theta=\delta\theta_0.
\end{align}
Therefore,
\begin{align*}
    -\frac{2}{3}\int a^2|\theta|^2&\overset{\eqref{da in the 1over3 case}}{=}\frac{1}{2}\int \theta^\sharp\big(a^2\big)=\frac{1}{2}\int a^2 \delta \theta \\
    &\overset{\eqref{deltathetadeltathetazero case 1over3}}{=}\frac{1}{2}\int a^2 \delta\theta_0=\frac{1}{2}\int \theta_0^\sharp\big(a^2\big)\\
    &\overset{\eqref{da in the 1over3 case}}{=}-\frac{2}{3}\int a^2|\theta_0|^2.
\end{align*}
We deduce that
$$\int a^4+a^2b^2=0,$$
and hence $a=0$. By the same argument, we also obtain $b=0$, thus proving our claim.\\

Identities \eqref{ddt1} and \eqref{ddt2} become
\begin{equation}\label{differential1/3}
    \begin{split}
        dV^\flat&=\frac{1}{3}\theta\wedge V^\flat,\\
        dJV^\flat&=\frac{1}{3}\theta\wedge JV^\flat.
    \end{split}
\end{equation}
For $t=\frac{1}{3}$, the Levi--Civita covariant derivatives of $V$ and $JV$ are given by \eqref{parallel}:
\begin{equation}\label{nabla1/3}
    \begin{split}
        \nabla^g_X V&=-\frac{1}{6}J\theta(X)JV-\frac{1}{6}JV^\flat(X)J\theta^\sharp-\frac{1}{3}V^\flat(X)\theta^\sharp,\\
        \nabla^g_X JV&=\frac{1}{6}J\theta(X)V+\frac{1}{6}V^\flat(X)J\theta^\sharp-\frac{1}{3}JV^\flat(X)\theta^\sharp.
    \end{split}
\end{equation}
By \eqref{tLiederivative}, the Lie derivative of $g$ along $V$ is
\begin{align}\label{62}
    \mathcal{L}_V g=-\frac{1}{3}\big(\theta\cdot V^\flat+J\theta\cdot JV^\flat\big).
\end{align}
Since $\theta(V)=a=0$ and $\theta=g(\theta^\sharp,\cdot)$, we have
\begin{align*}
    0&=d\iota_V\theta+\iota_V d\theta=\mathcal{L}_V\theta\\
    &=\mathcal{L}_V g\big(\theta^\sharp\big)+g\big(\mathcal{L}_V\theta^\sharp\big)\\
    &=-\frac{1}{3}|\theta|^2V^\flat+g\big(\mathcal{L}_V\theta^\sharp\big).
\end{align*}
It follows that
\begin{align}\label{LieVthetasharp1over3}
    \mathcal{L}_V\theta^\sharp=\frac{1}{3}|\theta|^2 V.
\end{align}
Consequently,
\begin{align*}
    V\big(|\theta|^2\big)&=V\big(\theta(\theta^\sharp)\big)\\
&=\mathcal{L}_V\theta\big(\theta^\sharp\big)+\theta\big(\mathcal{L}_V\theta^\sharp\big)=0.
\end{align*}
Applying the same argument to $JV$, we obtain
\begin{align}\label{Vnormthteaauqred}
    V\big(|\theta|^2\big)=JV\big(|\theta|^2\big)=0,
\end{align}
and
\begin{align}\label{3-2nLieJVtheta}
    \mathcal{L}_{JV} \theta^\sharp=\frac{1}{3}|\theta|^2JV.
\end{align}
    Since, by \eqref{brackett},
\begin{align}\label{brackett1}
    [V,JV]=\frac{1}{3}J\theta^\sharp,
\end{align}
the Jacobi identity together with \eqref{Vnormthteaauqred} implies
\begin{equation}\label{60}
    J\theta^\sharp\big(|\theta|^2\big)=0,
\end{equation}
and from \eqref{LieVthetasharp1over3} and \eqref{3-2nLieJVtheta} we obtain:
\begin{equation}
    \begin{split}
   \mathcal{L}_{J\theta^\sharp}\theta^\sharp&=3\mathcal{L}_{[V,JV]}\theta^\sharp\\
    &=3\big(\mathcal{L}_V\mathcal{L}_{JV}\theta^\sharp-\mathcal{L}_{JV}\mathcal{L}_V\theta^\sharp\big)\\
    &\overset{\eqref{LieVthetasharp1over3},\eqref{Vnormthteaauqred},\eqref{3-2nLieJVtheta}}{=}|\theta|^2[V,JV]=\frac{2}{3}|\theta|^2J\theta^\sharp.
    \end{split}
\end{equation}
Therefore,
    \begin{equation}\label{dJtheta theta Jtheta 1over3}
    \begin{split}
dJ\theta\big(\theta^\sharp,J\theta^\sharp\big)&=\theta^\sharp\big(J\theta(J\theta^\sharp)\big)-J\theta^\sharp\big(J\theta(\theta^\sharp)\big)-J\theta\big([\theta^\sharp,J\theta^\sharp]\big)\\
&=\theta^\sharp\big(|\theta|^2\big)+\frac{2}{3}|\theta|^4.
  \end{split}
\end{equation}
Since $\theta^\sharp$ is orthogonal to both $V,JV$, using \eqref{LieVthetasharp1over3}, we obtain
\begin{equation}\label{dJt1}
\begin{split}
    dJ\theta(V,\theta^\sharp)&=V\big(J\theta(\theta^\sharp)\big)-\theta^\sharp\big(J\theta(V)\big)-J\theta\big([V,\theta^\sharp]\big)\\
    &\overset{\eqref{LieVthetasharp1over3}}{=}-\frac{1}{3}J\theta\big(|\theta|^2V\big)=0.
\end{split}
\end{equation}
Similarly, \eqref{3-2nLieJVtheta} yields
\begin{equation}\label{dJt2}
    dJ\theta(JV,\theta^\sharp)=0.
\end{equation}
Since $dJ\theta$ is a $(1,1)$-form, it is $J$-invariant. Hence, using \eqref{dJt1} and \eqref{dJt2} we obtain
\begin{equation}\label{dJt3}    dJ\theta(JV,J\theta^\sharp)=dJ\theta(V,J\theta^\sharp)=0.
\end{equation}
Finally, \eqref{dJthetatcase} gives
\begin{equation}\label{dJt4}
    dJ\theta(V,JV)=-J\theta([V,JV])=-\frac13|\theta|^2.
\end{equation}

Suppose now that $n=2$. On the open set $T:=\{\theta\neq 0\}$ we can consider the orthonormal frame given by 
$$V,\ JV,\ \frac{\theta^\sharp}{|\theta|},\ \frac{J\theta^\sharp}{|\theta|}.$$
Using this frame, \eqref{again} together with \eqref{dJthetatcase} and \eqref{dJtheta theta Jtheta 1over3} becomes
\begin{align*}
    \delta\theta+\frac{\theta^\sharp\big(|\theta|^2\big)}{|\theta|^2}+\frac{4}{3}|\theta|^2=0.
\end{align*}
Consequently, on $T$ we have
\begin{align}\label{delta}
    \delta\bigg(\frac{\theta}{|\theta|^2}\bigg)+\frac{4}{3}=0.
\end{align}
From the Cartan formula we easily obtain 
\begin{equation}\label{69}
    \mathcal{L}_{J\theta^\sharp}\theta= \mathcal{L}_{J\theta^\sharp}V^\flat= \mathcal{L}_{J\theta^\sharp}JV^\flat=0.
\end{equation}
Moreover, from \eqref{dJtheta theta Jtheta 1over3}--\eqref{dJt4} we obtain at each point of $T$:
\begin{equation}\label{72}
    dJ\theta=-\frac13|\theta|^2V^\flat\wedge JV^\flat+\left(\frac23+\frac{\theta^\sharp(|\theta|^2)}{|\theta|^4}\right)\theta\wedge J\theta.
\end{equation}
Using \eqref{Vnormthteaauqred} and \eqref{60} we get at each point of $T$ 
\begin{equation}\label{73}
    d|\theta|^2=\frac{\theta^\sharp(|\theta|^2)}{|\theta|^2},
\end{equation}
so using \eqref{72}, \eqref{73}, and the Cartan formula again:
\begin{equation}\label{74}
    \mathcal{L}_{J\theta^\sharp}J\theta=d|\theta|^2-|\theta|^2\left(\frac23+\frac{\theta^\sharp(|\theta|^2)}{|\theta|^4}\right)\theta=-\frac23|\theta|^2\theta.
\end{equation}
On $T$ one can write the metric $g$ as
\begin{equation}\label{75}
    g=V^\flat\otimes V^\flat+JV^\flat\otimes JV^\flat+\frac1{|\theta|^2}(\theta\otimes\theta+J\theta\otimes J\theta),
\end{equation}
so from \eqref{69} and \eqref{74}, we obtain
\begin{equation} \label{76}
    \mathcal{L}_{J\theta^\sharp}g=-\frac23\theta\cdot J\theta.
\end{equation}
This equation also holds on the interior of $M\setminus T$ (where $\theta=0$), so by density it holds on $M$.
From \eqref{LiederivativeofJ} with $t=1/3$ we get
\begin{align}
        \mathcal{L}_V J=\frac{1}{3}\big(JV\wedge \theta+V\wedge J\theta\big),
    \end{align}
    so together with \eqref{LieVthetasharp1over3} we obtain 
    \begin{equation}\label{78}
        \mathcal{L}_V (J\theta^\sharp)= \mathcal{L}_{JV} (J\theta^\sharp)=0.
    \end{equation}

Consider now the universal cover $\tilde M$ of $M$ and denote the pull-backs with the same symbols for simplicity. We write $\theta=d\varphi$ for some $\varphi\in C^\infty(\tilde M)$. An easy calculation shows that 
$$\mathcal{L}_{(fX)}g=f\mathcal{L}_{X}g+df\cdot X^\flat$$ for every vector field $X$ and function $f$, so from \eqref{76} we obtain that $\xi:=e^{\frac23\varphi}J\theta^\sharp$ is a Killing vector field on $\tilde M$.  

Now, from \eqref{differential1/3} one can write $e^{-\frac13\varphi}V^\flat=ds$ and $e^{-\frac13\varphi}JV^\flat=dt$ for some functions $s,t$ on $M$, whose differentials never vanish. Denoting by $S$ some level set of the function $(s,t):\tilde M\to\mathbb{R}^2$, one can identify $\tilde M$ with $S\times \mathbb{R}^2$ via the flows of commuting vector fields dual to $ds$ and $dt$, like for instance $\partial_s:=e^{\frac13\varphi}V$ and $\partial_t:=e^{\frac13\varphi}JV-\frac{s}3e^{\frac23\varphi}J\theta^\sharp$ (which commute thanks to \eqref{brackett1} and \eqref{78}). In particular $S$ is connected.

The Killing vector field $\xi$ is tangent to $S$ at each point, and its restriction to $S$ is again Killing. Let $Z$ denote its zero set on $S$, which is a totally geodesic submanifold of even codimension. Since $\xi$ commutes with $\partial_s$ and $\partial_t$ by \eqref{78}, its zero set on $\tilde M$ (which is the set $\{\theta=0\}$) is equal to $Z\times\mathbb{R}^2$. On the other hand, $\theta$ is assumed to be non-identically zero, so $Z$ is not equal to $S$. Thus $Z$ is discrete in $S$.

Since moreover $Z$ is non-empty (otherwise we obtain a contradiction from \eqref{delta}), it follows that $S$ is a surface of revolution. It is thus either diffeomorphic to $S^2$ if $\xi$ has two zeros on S, or to $\mathbb{R}^2$  if $\xi$ has only one zero on $S$.

In the first case, $S$ is compact, so the function $\varphi$, which is constant in the direction of $\partial_s$ and $\partial_t$, is bounded on $\tilde M$. On the other hand, every $\gamma\in\Gamma:=\pi_1(M)$ preserves $\theta$, so satisfies $\gamma^*\varphi=\varphi+c_\gamma$ for some constant $c_\gamma$. Since $\varphi$ is bounded, one obtains $c_\gamma=0$ for every $\gamma\in\pi_1(M)$, so $\varphi$ is the pull-back of a function on $M$, whence $M$ is gcK. Moreover, every $\gamma\in\pi_1(M)$ satisfies $\gamma_*\xi=\xi,\gamma_*\partial_s=\partial_s,$ and preserves $ds$ and $dt$, so one can write for every $(x,s,t)\in S\times \mathbb{R}^2$:
\begin{equation}
    \gamma(x,s,t)=(\gamma_0(x,t),s+s_0,t+t_0),
\end{equation}
for some constants $s_0,t_0$ and diffeomorphisms $\gamma_0(\cdot,t)$ of $S$ preserving $\xi$.

In the second case, if $x_0$ denotes the unique zero of $\xi$ on $S$, one can check that for every unitary geodesic $c$ on $S$ with $c(0)=x_0$, the tangent vector $\dot{c}$ is proportional to $\theta^\sharp$ along $c$, with different signs when the time parameter changes sign, that is, there exists $\varepsilon\in\{\pm1\}$ such that $|\theta|\dot{c}(t)=\varepsilon\theta$ for $t\ge 0$ and $|\theta|\dot{c}(t)=-\varepsilon\theta$ for $t\le 0$. Therefore, the function $t\mapsto \varphi(c(t))$ has an absolute minimum at $0$ if $\varepsilon=1$ and an absolute maximum at $0$ if $\varepsilon=-1$, which shows that $\varphi$ is bounded on $\tilde M$ either from below if $\varepsilon=1$, or from above if $\varepsilon=-1$. As before, this implies that $\varphi$ is not surjective, so $M$ is gcK. \\

In the next example, following the above arguments, we construct a compact complex surface
admitting a nonzero $\nabla^{1/3}$-parallel vector field with respect to a
gcK metric that is not K\"ahler.

\begin{ex}\label{examplegcK_1/3}
Let $\tilde M:=\mathbb S^2\times\mathbb R^2$, with angular coordinates
$(\alpha,\beta)$ on $\mathbb S^2$ away from the poles and coordinates $(s,t)$
on $\mathbb R^2$. 
Extending the complex structure on $\mathbb{S}^2$, we define an almost complex structure $J$ on $\tilde{M}$ by
$$
J\partial_\alpha:=-\frac{1}{\sin\alpha}\partial_\beta,
\qquad
J\partial_s:=\partial_t+\frac{s}{3}\partial_\beta.
$$
Since
$$[\partial_\alpha-iJ\partial_\alpha,\partial_s-iJ\partial_s]=0,$$
$J$ is a complex structure on $\tilde{M}$. We recall that the vector field $\partial_\beta$ is Killing for the standard metric
$$
g_{0,\mathbb S}:=d\alpha^2+\sin^2\alpha\,d\beta^2
$$
on $\mathbb{S}^2$. Since $\cos\alpha$ is independent of $\beta$, $\partial_\beta$ is also Killing for
\begin{align}\label{metric_on_S^2}
    g_{\mathbb S}:=e^{\frac23\cos\alpha}
\bigl(d\alpha^2+\sin^2\alpha\,d\beta^2\bigr).
\end{align}
Set $\varphi:=\cos\alpha$ and $\theta:=d\varphi$, and define
$$
V:=e^{-\varphi/3}\partial_s,
\qquad \textrm{so that }
JV=e^{-\varphi/3}
\left(\partial_t+\frac{s}{3}\partial_\beta\right).
$$
We note that the vector field $\partial_\beta$ is also Killing for the Riemannian metric
$$
\tilde g
:=e^{\frac23\varphi}
\left(
d\alpha^2+ds^2+dt^2+
\sin^2\alpha\left(d\beta-\frac{s}{3}dt\right)^2
\right)
$$
on $\tilde{M}$. Since in the standard coordinates $(x,y,z)$ of $\mathbb{R}^3$ we have
\begin{align*}
    \sin^2\alpha \,d\beta=xdy-ydx,\quad  \quad \sin^2\alpha=x^2+y^2,
\end{align*}
and the metric \eqref{metric_on_S^2} is smooth, $\tilde g$ extends smoothly across the poles.
The metric $\tilde g$ is Hermitian with respect to $J$ and the fundamental 2-form $\tilde{\omega}$ associated to $(J,\tilde g)$ is given by
$$\tilde\omega
=e^{\frac23\varphi}
\left(-\sin\alpha\,d\alpha\wedge\left(d\beta-\frac{s}{3}dt\right)+ds\wedge dt\right),$$
and satisfies
$d\tilde\omega=\theta\wedge\tilde\omega$. Thus $\tilde g$ is
gcK and $\theta$ is the Lee form of $\tilde{\omega}$. Moreover,
$$
\theta^\sharp
=-\sin\alpha\,e^{-\frac23\varphi}\partial_\alpha,
\qquad
J\theta^\sharp=e^{-\frac23\varphi}\partial_\beta.
$$

A direct computation using the Koszul formula and the lcK expression
for the Gauduchon connections \eqref{nablatt} shows that
$V$ is parallel with respect to the Gauduchon connection
$\nabla^{1/3}_{\tilde g}$. To show this, we consider the orthonormal frame $\{e_i\}_{i=1}^4$ given by
\begin{equation*}
    \begin{aligned}
        e_1&=e^{-u}\partial_\alpha,&\quad e_2&=e^{-u}\partial_s=V,\\ 
    e_3&=e^{-u}\left(\partial_t+\frac{s}{3}\partial_\beta\right)=JV,&\quad e_4&=\frac{e^{-u}}{\text{sin}\alpha}\partial_\beta,
    \end{aligned}
\end{equation*}
where $u:=\frac{1}{3}\text{cos}\alpha=\frac{1}{3}\varphi$. 
Setting $\lambda:=\frac{\text{sin}\alpha}{3}e^{-u}$, we obtain
\begin{equation}\label{bracketse_i}
    \begin{aligned}
         [e_1,e_2]&=\lambda e_2,&\quad [e_1,e_3]&=\lambda e_3,\\
    [e_1,e_4]&=e^{-u}\bigg(\frac{\text{sin}\alpha}{3}-\frac{\text{cos}\alpha}{\text{sin}\alpha}\bigg)e_4,&\quad[e_2,e_3]&=\lambda e_4,\\
     [e_2,e_4]&=[e_3,e_4]=0.
    \end{aligned}
\end{equation}
Combining \eqref{bracketse_i} with 
$$\theta^\sharp=-e^{-2u}\text{sin}\alpha \,e_1,\quad J\theta^\sharp=e^{-2u}\text{sin}\alpha\,e_4, $$
and the Koszul formula for orthogonal vector fields $X,Y,Z$,
$$2\tilde{g}(\nabla^{\tilde{g}}_XY,Z)=\tilde{g}\big([X,Y],Z\big)-\tilde{g}\big([Y,Z],X\big)+\tilde{g}\big([Z,X],Y\big),$$
we compute the Levi--Civita covariant derivative of $V$ along $V,JV,\theta^\sharp,J\theta^\sharp$:
\begin{equation*}
    \begin{aligned}
        \nabla_V^{\tilde{g}} V&=-\frac{1}{3}\theta^\sharp,&\quad \nabla_{JV}^{\tilde{g}}V&=-\frac{1}{6}J\theta^\sharp,\\
        \nabla_{\theta^\sharp}^{\tilde{g}}V&=0,&\quad \nabla_{J\theta^\sharp}^{\tilde{g}}V&=-\frac{1}{6}|\theta|^2JV.
    \end{aligned}
\end{equation*}
Hence, for every vector field $X$ on $\tilde{M}$, the Levi--Civita covariant derivative of $V$ in given by
\begin{align}\label{LCforVinthegcKexample}
    \nabla_X^{\tilde{g}}V&=-\frac{1}{3}\tilde{g}(X,V)\theta^\sharp-\frac{1}{6}g(X,JV)J\theta^\sharp-\frac{1}{6}g(X,J\theta^\sharp)JV.
\end{align}
Using \eqref{nablatt} and \eqref{LCforVinthegcKexample}, we obtain that $V$ is parallel with respect to the Gauduchon connection
$\nabla^{1/3}_{\tilde g}$.

Let $a,b>0$ be positive numbers such that $ab/3\in2\pi\mathbb Z$, and consider the free and properly discontinuous actions
\begin{align*}
\gamma_a(\alpha,\beta,s,t)
  &:=\left(\alpha,\beta+\frac{a}{3}t,s+a,t\right),\\
\gamma_b(\alpha,\beta,s,t)
  &:=(\alpha,\beta,s,t+b).
\end{align*}
Direct computations show that $\gamma_a,\gamma_b$ commute and preserve $J$,$\tilde g$, $\theta$, and $V$.
Consequently, the compact complex surface
$$
M:=\tilde M/\langle \gamma_a,\gamma_b\rangle
$$
inherits a gcK metric $g$ and a nonzero $\nabla^{1/3}_g$-parallel vector
field induced respectively by $\tilde g$ and $V$. The metric $g$ is not K\"ahler, since its Lee form $\theta$
is not identically zero.
\end{ex}
\begin{rem}\label{high_dim_ex}
Example~\ref{examplegcK_1/3} provides a compact complex surface
$(M,J,g_M,\omega_M,\theta=d\varphi)$ endowed with a non-K\"ahler gcK metric $g_M$
and a nontrivial vector field $V$ that is parallel with respect to the
Gauduchon connection $\nabla^{1/3}_{g_M}$. This yields higher-dimensional examples. Indeed, for any
compact K\"ahler manifold $(X,g_X)$, the product $P:=M\times X$, equipped
with the metric $g:=g_M\oplus e^{\varphi}g_X$, is a compact Hermitian
manifold of complex dimension $2+\dim_{\mathbb C}X$ admitting the non-K\"ahler gcK
metric $g$ such that
the extension of $V$ to $P$ is a nontrivial
$\nabla^{1/3}_g$-parallel vector field.
This follows from the conformal change formula for the Levi--Civita connection \cite[Theorem 1.159]{Bes08}, equation \eqref{nablatt}, and $\theta(V)=\theta(JV)=0$. Indeed, for any vector field $Y$ on $P$, we have
\begin{equation*}
    \begin{split}
        {\nabla^{\frac13}_g} _Y V&=\nabla^{g_M+e^\varphi g_X}_Y V-\frac{1}{6}\theta(JY)JV-\frac16 g(JY,V)J\theta^\sharp+\frac{1}{3}g(Y,V)\theta^\sharp\\
        &=\nabla^{e^\varphi(e^{-\varphi}g_M+g_X)}_YV-\frac{1}{6}\theta(JY)JV-\frac16g_M(JY,V)J\theta^\sharp+\frac13g_M(Y,V)\theta^\sharp\\
        &=\nabla^{e^{-\varphi}g_M}_Y V+\nabla^{g_X}_Y V+\frac12\theta(Y)V+\frac12\theta(V)Y-\frac12 g_M(Y,V)\theta^\sharp\\
        &-\frac{1}{6}\theta(JY)JV-\frac{1}{6}g_M(JY,V)J\theta^\sharp+\frac13g_M(Y,V)\theta^\sharp\\
        &=\nabla^{g_M}_YV-\frac{1}{6}\theta(JY)JV-\frac{1}{6}g_M(JY,V)J\theta^\sharp+\frac13g_M(Y,V)\theta^\sharp={\nabla^{\frac13}_{g_M}} _Y V=0.
    \end{split}
\end{equation*}
\end{rem}

\bibliographystyle{alpha}

\end{document}